\documentclass[sigconf]{acmart}
\newif\ifextendedversion
\extendedversiontrue
\newif\ifarxivversion
\arxivversiontrue
\settopmatter{authorsperrow=3}
\AtBeginDocument{%
  }

\copyrightyear{2026}
\acmYear{2026}
\setcopyright{cc}
\setcctype{by}
\acmConference[KDD '26]{Proceedings of the 32nd ACM SIGKDD Conference on Knowledge Discovery and Data Mining V.2}{August 09--13, 2026}{Jeju Island, Republic of Korea}
\acmBooktitle{Proceedings of the 32nd ACM SIGKDD Conference on Knowledge Discovery and Data Mining V.2 (KDD '26), August 09--13, 2026, Jeju Island, Republic of Korea}
\acmDOI{10.1145/3770855.3818103}
\acmISBN{979-8-4007-2259-2/2026/08}



\usepackage{xcolor}
\usepackage{amsmath}
\usepackage[ruled,noend]{algorithm2e}
\usepackage{multirow}
\usepackage{makecell}
\usepackage{pifont}
\usepackage{booktabs,multirow}
\usepackage{xspace}
\newcommand{\ourmodel}{\textsc{RidgeCut}\xspace}

\newtheorem{prop}{Proposition}
\newtheorem{lemma}{Lemma}

\usepackage{subcaption}
\usepackage{graphicx}
\usepackage{placeins}
\microtypesetup{expansion=false}

\begin{document}

\title{RIDGECUT: Learning Graph Partitioning with Rings and Wedges}

\author{Qize Jiang}
\authornote{Also with Shanghai Institute of Intelligent Electronics \& Systems, Shanghai, China.}
\orcid{0000-0002-1636-8825}
\email{qzjiang21@m.fudan.edu.cn}
\affiliation{%
  \department{College of Computer Science and Artificial Intelligence, Shanghai Key Laboratory of Data Science}
  \institution{Fudan University}
  \city{Shanghai}
  \country{China}
}

\author{Angelo Zangari}
\orcid{0009-0002-1865-7337}
\email{azang@uic.edu}
\affiliation{%
  \institution{University of Illinois Chicago}
  \city{Chicago}
  \state{Illinois}
  \country{United States}
}

\author{Linsey Pang}
\orcid{0000-0002-4784-9795}
\email{panglinsey@gmail.com}
\affiliation{%
  \institution{PayPal Inc.}
  \city{San Jose}
  \state{California}
  \country{United States}
}

\author{Alice Gatti}
\orcid{0000-0001-5692-3996}
\email{alice@safe.ai}
\affiliation{%
  \institution{Center for AI Safety}
  \city{San Francisco}
  \state{California}
  \country{United States}
}

\author{Mahima Aggarwal}
\orcid{0000-0002-3552-9233}
\email{mahima13@gmail.com}
\affiliation{%
  \institution{Qatar Computing Research Institute}
  \institution{Hamad Bin Khalifa University}
  \city{Doha}
  \country{Qatar}
}

\author{Giovanna Vantini}
\orcid{0009-0000-0406-3525}
\email{gvantini@hbku.edu.qa}
\affiliation{%
  \institution{Qatar Computing Research Institute}
  \institution{Hamad Bin Khalifa University}
  \city{Doha}
  \country{Qatar}
}

\author{Xiaosong Ma}
\orcid{0000-0003-1261-2496}
\email{xiaosongma@acm.org}
\affiliation{%
  \department{Computing and Mathematical Sciences (CMS) Division}
  \institution{Mohamed bin Zayed University of Artificial Intelligence (MBZUAI)}
  \city{Abu Dhabi}
  \country{United Arab Emirates}
}

\author{Weiwei Sun}
\authornotemark[1]
\authornote{Corresponding authors.}
\orcid{0000-0001-9483-4599}
\email{wwsun@fudan.edu.cn}
\affiliation{%
  \department{College of Computer Science and Artificial Intelligence, Shanghai Key Laboratory of Data Science}
  \institution{Fudan University}
  \city{Shanghai}
  \country{China}
}

\author{Sourav Medya}
\orcid{0000-0003-0996-2807}
\email{medya@uic.edu}
\affiliation{%
  \institution{University of Illinois Chicago}
  \city{Chicago}
  \state{Illinois}
  \country{United States}
}

\author{Sanjay Chawla}
\authornotemark[2]
\orcid{0000-0002-8102-2572}
\email{schawla@hbku.edu.qa}
\affiliation{%
  \institution{Qatar Computing Research Institute}
  \institution{Hamad Bin Khalifa University}
  \city{Doha}
  \country{Qatar}
}

\renewcommand{\shortauthors}{Qize Jiang et al.}

\begin{abstract}
Reinforcement learning (RL) has shown promise for combinatorial optimization problems on graphs by learning heuristics that generalize across instances. However, effectively incorporating domain knowledge into RL frameworks for graph partitioning remains challenging, as existing approaches typically rely on unconstrained node-level actions that lead to large action spaces and inefficient exploration.
In this paper, we propose \ourmodel, an RL framework that constrains the action space to enforce structure-aware partitioning in the Normalized Cut problem.
Using transportation networks as a motivating example, we introduce a novel concept that leverages domain knowledge about urban road topology---where natural partitions often take the form of concentric rings and radial wedges. By transforming the graph into linear or circular representations, our method enables the use of transformer-based policies and efficient learning via Proximal Policy Optimization. The resulting partitions from \ourmodel are not only aligned with expected spatial layouts but also achieve lower normalized cuts compared to existing methods. Experimental results on synthetic and real-world traffic graphs demonstrate that \ourmodel consistently outperforms existing methods while exhibiting strong inductive generalization across graph sizes. Although motivated by road networks, \ourmodel provides a general mechanism for embedding structural priors into RL frameworks for graph partitioning.

\end{abstract}

\ccsdesc[500]{Computing methodologies~Reinforcement learning}
\ccsdesc[500]{Mathematics of computing~Graph algorithms}

\keywords{graph partitioning, normalized cut, reinforcement learning}

\maketitle

\ifarxivversion
\begin{center}
\small This is an extended version of the paper accepted at KDD 2026 and includes appendix.
\end{center}
\fi

\section{Introduction}
Reinforcement Learning (RL) has emerged as a powerful approach for solving complex combinatorial optimization (CO) problems\cite{grinsztajn2023reinforcement, wang2021deep, mazyavkina2021reinforcement}. Two key insights underpin the use of RL in CO: first, that the search space of CO can be effectively encoded as vector embeddings; and second, gradients can be computed even when the objective is a black-box function or non-differentiable. A significant advantage of RL-based approaches is their ability to generalize: once trained, they can solve new instances of CO problems without requiring retraining from scratch  \cite{dong2020deep}.
When addressing real-world problems through combinatorial optimization, 
utilizing insights from the specific domain can lead to 
improved performance compared with standard CO problems. 
However, integrating this domain knowledge into 
RL frameworks poses significant challenges. 
Existing methods often fail to leverage domain knowledge effectively. 
Typically, this knowledge is incorporated by either augmenting states or modifying rewards \cite{DBLP:conf/aaai/PratesALLV19,DBLP:conf/ictai/LemosPAL19,DBLP:conf/nips/KaraliasL20}; 
however, these approaches do not fully incorporate the domain knowledge, and may result in suboptimal performance.
To address this issue, 
we propose a novel approach that employs a constrained action 
space informed by domain knowledge. 
\textit{Domain knowledge and constrained action space. }In this paper, we address a specific problem to demonstrate the effectiveness of constraining the action space in RL for CO problems. Our primary focus is on the Normalized Cut (NC) of a graph, which is suitable to balance the simulating traffic on road networks. Existing RL methods ~\cite{shah2024neurocut} for solving graph partitioning problems typically involve the direct movement of adjacent nodes between partitions. These approaches have full action space, but often require a long or potentially unlimited number of steps to achieve optimal partitions.
In contrast, our method integrates domain knowledge pertaining to ring and wedge partitioning and produces improved partitioning outcomes. Figure \ref{fig:domain-knowledge} (Sec. \ref{sec:our_method}) illustrates an overview of the proposed approach of our work. 
\begin{figure*}[t]
  \centering
  
  \includegraphics[width=1\linewidth]{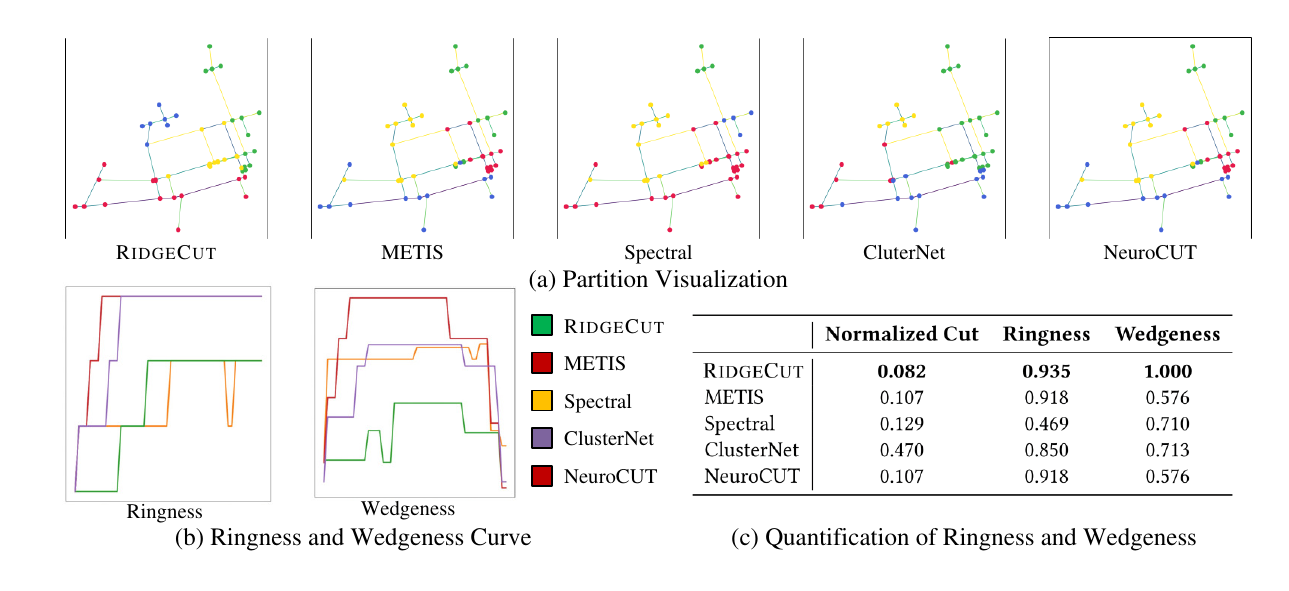}
  \caption{Compared with other methods, \ourmodel has the minimal Normalized Cut, and also achieves the highest Ringness and Wedgeness.
  NeuroCUT is initialized by METIS partition, and fails to find a better one, which causes the same result. 
  }
  \Description{Visual and metric comparison of RidgeCut against graph partitioning baselines on normalized cut, ringness, and wedgeness.}
  
  \label{fig:first-page}
\end{figure*}
\textit{Justification of domain knowledge.} The ring and wedge configuration represents a fundamental structure in modern urban cities. 
Road networks are often organized as concentric
rings of roads centered at a city downtown followed by wedge structures connecting the outer ring.
For example, Beijing's Five Ring includes a large circular expressway 
encircling the city, acting as the outermost "ring" within the urban area, 
while traffic flows from various surrounding suburban areas converge toward the city center.
For microscopic traffic simulation, where the movement of every vehicle is modeled
in a simulator, it often becomes necessary to partition the road network and assign
each partition to a separate simulator in order to reduce the overall simulation time. 
We thus ensure that the partitions respect the natural physical topology of the road network, 
and this understanding serves as a vital domain knowledge when conducting graph partitioning on road networks.
Directly using classical approaches \cite{karypis1997metis}
or modern learning based RL solutions \cite{shah2024neurocut} provide no provision to constrain the generation
of partition shapes justifying the need for a new approach.
\textit{Integration of domain knowledge.} To integrate domain knowledge and thereby constrain the action space, 
our key insight is to convert complex graph structures into 
simpler representations, such as a line or a circle,
and reducing the complexity of the partitioning problem. 
In the ring transformation, nodes are projected onto the $x$-axis according to their radial distance from the center, preserving the node order and partitioning properties. 
Similarly, in the wedge transformation, nodes are projected onto a unit circle, focusing on their angular positions. After transforming the graph, we apply Proximal Policy Optimization (PPO)~\cite{schulman2017proximal} to solve the partitioning problem. 
Figure \ref{fig:first-page} presents performance of different methods.  
In Figure \ref{fig:first-page}(a), a snapshot of the partitions generated by different methods shows that other methods except our proposed method (\ourmodel) tend to mix nodes from different partitions, resulting in high Normalized Cut. Figures \ref{fig:first-page}(b) and \ref{fig:first-page}(c) introduce Ringness and Wedgeness metrics to evaluate how closely a partition aligns with ring and wedge structures. Our method, \ourmodel, achieves the lowest Normalized Cut while maintaining the highest Ringness and Wedgeness scores. Our main contributions are as follows:
\begin{itemize}
    \item \textbf{Novel Direction.} We introduce a new perspective on RL for graph partitioning by embedding domain knowledge directly into the action space. Instead of operating on unconstrained node-level actions, we show that restricting actions to structured partition primitives can substantially simplify exploration while preserving expressiveness. 
    \item \textbf{Novel Method.} We introduce \ourmodel an RL framework that exploits ring- and wedge-based partitioning to transform graph partitioning into a sequence of structured decisions. By converting graphs into linear and circular representations, \ourmodel enables the use of transformer-based approaches and efficiently optimizes the Normalized Cut objective under a constrained action space.
    \item \textbf{Experiments. }Through extensive experiments on synthetic graphs and real-world traffic networks, we demonstrate that \ourmodel consistently achieves lower normalized cuts, produces partitions aligned with meaningful spatial structures, and exhibits strong inductive generalization across graph sizes without retraining.
\end{itemize}

\section{Related Work} 
\label{sec:related-work}
\textbf{Graph Partitioning. }Graph partitioning~\cite{buluc:partitioning} is widely used in graph-related applications, especially for enabling parallel or distributed graph processing. 
Partitioning a graph into $k$ blocks of equal size while minimizing cuts is NP-complete~\cite{HR73}. Exact methods focus on bipartitioning~\cite{hager:exact} or few partitions ($k \leq 4$)~\cite{ferreira:partitioning}, while approximate algorithms include spectral partitioning~\cite{DBLP:conf/nips/NgJW01,donath:lowerbound} and graph-growing techniques~\cite{George:growing}. More powerful methods involve iterative refinement, such as node-swapping for bipartitioning~\cite{KL70}, extendable to $k$-way local search~\cite{Karypis:kway}. Other approaches include the bubble framework~\cite{diekmann:shape} and diffusion-based methods~\cite{meyerhenke:diffusion, pellegrini:diffusion}. State-of-the-art techniques rely on multilevel partitioning~\cite{karypis:multilevel}, which coarsen the graph and refine the partition iteratively.
\begin{figure*}[ht]
  \centering
  
  \includegraphics[width=1\linewidth]{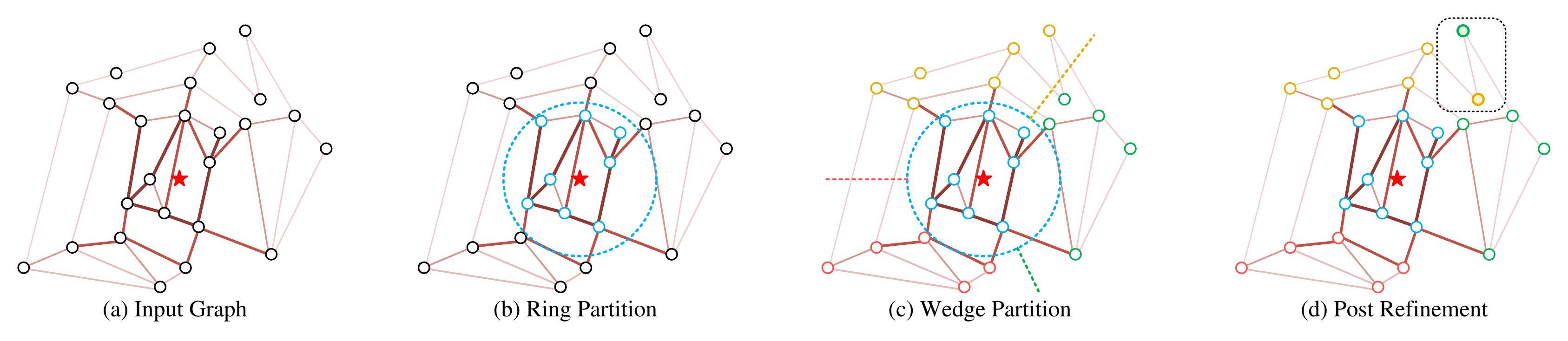}
\caption{Graph partitioning with Ring and Wedge to minimize the Normalized Cut. We first do ring partitions as (b), to choose different radii to partition the graph into rings. Then for the outermost ring, we do partitions based on different angles as (c). Finally, we apply post-refinement to improve the final partition performance as (d). 
  }
  \Description{Illustration of ring partitioning, wedge partitioning, and post-refinement on a graph.}
  \label{fig:ring-wedge-sample}
\end{figure*}
One of the most well-known tools is METIS~\cite{Metis, karypis:multilevel}, 
which uses multilevel recursive bisection and $k$-way algorithms, 
with parallel support via ParMetis~\cite{ParMetis} 
and hypergraph support via hMetis \cite{DBLP:conf/dac/KarypisAKS97,DBLP:journals/tvlsi/KarypisAKS99}. 
Other tools include Scotch~\cite{scotch, pellegrini:diffusion} and KaHIP~\cite{sanders:engineering} use various advanced techniques. 
However, these methods fail to give optimal partitions with normalized cuts, 
and are also unable to integrate extra domain knowledge.
\textbf{ML-based Graph Partitioning Algorithms. }Recent work explores ML methods for graph partitioning, particularly using GNN \cite{DBLP:conf/iclr/KipfW17,DBLP:conf/iclr/VelickovicCCRLB18,DBLP:conf/iclr/Brody0Y22}. GNN aggregate node and edge features via message passing. In~\cite{dlspectral2022}, a spectral method is proposed where one GNN approximates eigenvectors of the graph Laplacian, which are then used by another GNN for partitioning. The RL-based method in~\cite{rlgp2022} refines partitions in a multilevel scheme. NeuroCUT~\cite{shah2024neurocut} introduces a reinforcement learning framework generalizing across partitioning objectives via GNN. It demonstrates flexibility for different objectives and unseen partition numbers. ClusterNet~\cite{wilder2019end} integrates graph learning and optimization with a differentiable k-means clustering layer, simplifying tasks like community detection and facility location. 
However, neither of these methods handles weighted graphs, making them unsuitable in our scenarios.

\section{Problem Description}
Let $G = (V,E,W,o)$ be a weighted planar graph, with vertex set $V$, edge set $E$, edge weights $W$, and a predefined center $o$. A $k$-way partition $P$ of $G$ is defined as a partition $\{p_1, ..., p_k\}$ of $V$, where $\bigcup_{i=1}^kp_i=V$ and $\forall i \neq j, p_i \cap p_j = \emptyset$.
 For each partition $p_i$, we define
\begin{align}
  Cut(G, p_i) = \sum_{u \in p_i \otimes v \in p_j}W(e_{u,v}) 
 \end{align} 
\begin{align}
Volume(G, p_i) =\sum_{u, v \in p_i}W(e_{u,v}) + Cut(G, p_i)
\end{align}
where $\otimes$ represents the XOR operator.
The \textit{Normalized Cut} of a partition $P$ on graph $G$ is then defined as
\begin{equation}\label{eq: ncut_definition}
    NC(G, P) = \max_{i\in\{1..k\}}\frac{Cut(G,p_i)}{Volume(G,p_i)}.
\end{equation}
To solve this, we utilize domain knowledge that 
ring and wedge partitions are effective for road networks, so we restrict our attention to partitions with specific structures, 
where each partition is either ring-shaped or wedge-shaped. 
We also allow for ``fuzzy" rings and wedges, where a small number of nodes 
are swapped to adjacent partitions. This relaxation helps achieve partitions 
with a smaller normalized cut, particularly for graphs derived from 
real-world applications. Our partitioning strategy follows a three-step process: first, we perform a ring partition on the graph, then we apply a wedge partition to the outermost rings, and finally, we refine the partitions to further reduce the normalized cut. Figure~\ref{fig:ring-wedge-sample} illustrates these three steps.
\noindent\textbf{Ring Partition:} A Ring Partition of the graph $G$ with respect to the center $o$, denoted by $P^r$, divides $G$ into $k_r$ distinct concentric rings. 
Define the radii as $0 = r_0 \leq r_1 \leq r_2 \leq \dots \leq r_{k_r-1} < r_{k_r}$. These radii partition $G$ into $k_r$ rings, where the $i$-th ring, denoted as $p^r_i$, contains all nodes with a distance to the center $o$ between $r_{i-1}$ and $r_i$. 
\noindent\textbf{Wedge Partition:} 
A Wedge Partition, denoted as $P^w$, divides the outermost ring $p^r_{k_r}$ into multiple wedge-shaped sections. The partitioning angles are given by $0 \leq a_1 \leq a_2 \leq \dots \leq a_{k_w} < 2\pi$. 
These angles split $p^r_{k_r}$ into $k_w$ wedge parts, where the $i$-th wedge, $p^w_i$, contains the nodes whose polar angles are between $[a_i, a_{i+1})$, except for the wedge $p^w_{k_w}$, which contains nodes whose angles fall within either $[0, a_1)$ or $[a_{k_w}, 2\pi)$.
This type of partition divides the graph into $k_r - 1$ inner rings and $k_w$ wedges on the outermost ring (see Figure \ref{fig:ring-wedge-sample}). Specifically, if $k_r = 1$, the entire graph is partitioned solely by wedges and, conversely, if $k_w = 1$ the graph is partitioned solely by rings.
For simplicity, when a graph $G$ is partitioned by a Ring-Wedge Partition with $k_r$ and $k_w$, we define $k = k_r + k_w - 1$, with $p_k=p^r_k$ when $k < k_r$, and $p_k=p^w_{k-k_r+1}$ when $k>=k_r$. 
And we define the total partition strategy as $P=\{p_1, ..., p_k\}$.
We also propose Ringness and Wedgeness metrics to evaluate whether a partition is close to a ring or wedge shape and provide theoretical analyses for ring and wedge subsets.
\ifextendedversion
\textit{Detailed definitions of Ringness and Wedgeness are provided in Appendix \ref{app:xness}.}
\fi

\subsection{Theoretical Analyses for Ring and Wedge Partition }
Our main theoretical results show that the normalized cut defined in \eqref{eq: ncut_definition} satisfies bounds on the Cheeger constant similar to the classical setting \cite{chung1997spectral} in the case of unweighted spider web graphs. These bounds give a theoretical justification of the normalized cut definition \eqref{eq: ncut_definition} and the ring-wedge shaped partition.
\ifextendedversion
Details and proofs are provided in Appendix \ref{app:bound}.
\fi
\\
\noindent\textbf{Definition:} \textit{Let $G_{n,r}$ be an unweighted spider web graph with $r$ rings and $n$ points in each ring, and $k$ be an integer. Define the ring and wedge Cheeger constants as:}
\begin{align}
\phi_{n,r}(k) &\!=\! \min_{\substack{P=V_1\cup\cdots\cup V_k \\ \text{wedge partition}}} NC(G_{n,r}, P) 
\end{align}
\begin{align}
\psi_{n,r}(k) &\!=\! \min_{\substack{P=V_1\cup\cdots\cup V_k \\ \text{ring partition}}} NC(G_{n,r}, P).
\end{align}
\begin{prop}\label{prop: 1}
Let $G_{n,r}$ be a spider web graph with $r$ rings and $N$ nodes in each ring. Let $\lambda_k^C$ and $\lambda_k^P$ be the eignevalues of the circle and path graphs with $n$ and $r$ vertices respectively. Then
\begin{align}
\phi_{n,r}(k) &\leq \frac{2r}{2r-1}\sqrt{2\lambda_k^C}, &\quad 2 \leq k \leq n \\
\psi_{n,r}(k) &\leq \sqrt{2\lambda_k^P}, &\quad 2 \leq k \leq r
\end{align}
 \end{prop}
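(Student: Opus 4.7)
The plan is to reduce the constrained Cheeger problem on the spider web $G_{n,r}$ to the classical $k$-partition Cheeger problem on a one-dimensional base graph ($C_n$ for wedges, $P_r$ for rings), and then invoke a Cheeger-type inequality on that base graph. The key observation is that there is an obvious bijection between wedge partitions of $G_{n,r}$ with cutting angles $a_1,\dots,a_k$ and $k$-arc partitions of the cycle $C_n$ whose nodes correspond to the $n$ angular positions; likewise, ring partitions of $G_{n,r}$ are in bijection with $k$-segment partitions of the path $P_r$ whose nodes correspond to the $r$ radii.

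Step two is a bookkeeping computation showing how cuts and volumes scale under these liftings. For a wedge $V_i$ lifted from arc $S_i\subseteq C_n$, each cut edge of $C_n$ lifts to exactly $r$ circular cut edges in $G_{n,r}$, one per ring. A direct degree count -- a single column of the spider web has degree sum $3+4(r-2)+3=4r-2$, versus degree $2$ in $C_n$ -- gives $\mathrm{vol}_{G_{n,r}}(V_i)=(2r-1)\,\mathrm{vol}_{C_n}(S_i)$, so $NC(G_{n,r},V_i)=\tfrac{r}{2r-1}\,NC(C_n,S_i)$. For a ring $V_i$ lifted from segment $S_i\subseteq P_r$, the analogous count gives $\mathrm{cut}_{G_{n,r}}(V_i)=n\,\mathrm{cut}_{P_r}(S_i)$, while $\mathrm{vol}_{G_{n,r}}(V_i)\geq 2n\,\mathrm{vol}_{P_r}(S_i)$ (the inequality is tight for interior segments and strict when $V_i$ contains the inner- or outermost ring, which only helps), and hence $NC(G_{n,r},V_i)\leq NC(P_r,S_i)$.

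The final step is to invoke a Cheeger-type bound for $C_n$ and for $P_r$ that exhibits a $k$-arc (resp.\ $k$-segment) partition whose maximum normalized cut is at most $\sqrt{2\lambda_k^C}$ (resp.\ $\sqrt{2\lambda_k^P}$). Combining with the scaling inequalities above immediately yields $\phi_{n,r}(k)\leq\tfrac{r}{2r-1}\sqrt{2\lambda_k^C}\leq\tfrac{2r}{2r-1}\sqrt{2\lambda_k^C}$ and $\psi_{n,r}(k)\leq\sqrt{2\lambda_k^P}$, matching the claim. The main obstacle is this last step: the general higher-order Cheeger inequality carries a $\mathrm{poly}(k)$ factor, so the clean $\sqrt{2\lambda_k}$ form requires exploiting the explicit trigonometric eigenfunctions of the cycle and the path, whose level sets are automatically consecutive arcs or segments. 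Using the relevant eigenvector as a test vector in a Rayleigh-quotient computation and carrying out a classical Cheeger sweep-cut argument on each of its sign-classes -- together with a careful bookkeeping of the inner and outer ring boundary degrees -- should deliver the stated constants.
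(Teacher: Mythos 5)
Your reduction to the base graphs and your degree counts are correct and mirror what the paper does: the paper likewise evaluates the maximally symmetric wedge partition (each wedge has cut $2r$ and volume $(4r-2)\lfloor N/k\rfloor$, giving $\varphi_{N,r}(k)\leq \frac{r}{(2r-1)\lfloor N/k\rfloor}$) and the symmetric ring partition (giving $\psi_{N,r}(k)\leq \frac{1}{2\lfloor r/k\rfloor}$). But the step you defer to the end --- that $C_n$ admits a $k$-arc partition with maximum normalized cut at most $\sqrt{2\lambda_k^C}$, and $P_r$ a $k$-segment partition with maximum normalized cut at most $\sqrt{2\lambda_k^P}$ --- is the crux, it is not a citable off-the-shelf result (as you note, higher-order Cheeger carries a $\mathrm{poly}(k)$ factor), and no sweep-cut argument is actually needed. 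The paper closes the proof purely computationally: using $\lambda_k^C = 1-\cos\bigl(2\pi\lfloor k/2\rfloor/N\bigr)$, the identity $1-\cos 2x = 2\sin^2 x$, and $\sin x \geq \tfrac{2}{\pi}x$ on $[0,\tfrac{\pi}{2}]$, it shows $\sqrt{2\lambda_k^C}\geq \frac{1}{2\lfloor N/k\rfloor}$ (and analogously $\sqrt{2\lambda_k^P}\geq \frac{1}{2\lfloor r/k\rfloor}$), then compares with the explicit partition bounds above. No Cheeger machinery on the base graph enters at all.

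There is also a concrete factor-of-two problem in your accounting. The balanced $k$-arc partition of $C_n$ has maximum normalized cut $\frac{1}{\lfloor n/k\rfloor}$, which the elementary eigenvalue estimate only bounds by $2\sqrt{2\lambda_k^C}$, not $\sqrt{2\lambda_k^C}$; the same holds for $P_r$. In the wedge case you happen to have a spare factor of $2$ (your scaling gives $\frac{r}{2r-1}$ against the target $\frac{2r}{2r-1}$), so $\frac{r}{2r-1}\cdot 2\sqrt{2\lambda_k^C}$ lands exactly on the claimed bound --- but only if you use the weaker, provable cycle bound rather than the one you assert. In the ring case you have discarded the factor you need: from $\mathrm{cut}_G(V_i)=n\,\mathrm{cut}_{P_r}(S_i)$ and $\mathrm{vol}_G(V_i)\geq 2n\,\mathrm{vol}_{P_r}(S_i)$ you should conclude $NC(G,V_i)\leq \tfrac{1}{2}NC(P_r,S_i)$, not $NC(G,V_i)\leq NC(P_r,S_i)$; keeping that $\tfrac12$ is precisely what absorbs the factor of $2$ in the path estimate and yields $\psi_{n,r}(k)\leq\sqrt{2\lambda_k^P}$. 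As written, the ring bound does not close.
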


\section{Our Method: \ourmodel}
\label{sec:our_method}
We begin by introducing the settings of reinforcement learning environment and provide a comprehensive overview of the agent's function and its interactions with the environment to achieve the final partitioning. For simplicity, we pre-define the ring partition number as $k_r$ and wedge partition number as $k_w$. When $k$-partitioning  a graph, we enumerate all possible ring partition numbers, then select the one that minimizes the Normalized Cut.
\subsection{The RL Framework of \ourmodel}
 We propose an RL-based framework called \ourmodel (\underline{RI}ng and We\underline{DGE} Informed \underline{CUT}) to solve normalized cut. Figure~\ref{fig:domain-knowledge} provides an overview of the proposed framework. Existing RL-based approaches for graph partitioning~\cite{shah2024neurocut} typically operate by iteratively relocating individual nodes between partitions. While such methods expose a fully expressive action space, they often rely on long action sequences to reach high-quality partitions, which complicates learning and limits efficiency. In contrast, our approach incorporates domain knowledge through ring- and wedge-based partitioning primitives, effectively constraining the action space to structurally meaningful operations. This design enables more direct exploration of high-quality partitions and simplifies the learning process by aligning actions with the underlying graph structure.
Specifically, we use Proximal Policy Optimization (PPO) \cite{schulman2017proximal} to train the model with non-differential optimizing targets. PPO is a widely-used RL algorithm that optimizes the policy by minimizing a clipped surrogate objective, ensuring limited deviation from the old policy $\pi_\text{old}$. The PPO objective maximizes the following:
\[
\mathbb{E}_{t} \left[ \min(r_t(\theta) A_t, \text{clip}(r_t(\theta), 1-\epsilon, 1+\epsilon) A_t) \right],
\] 
where $r_t(\theta) = \frac{\pi_{\theta}(a_t|s_t)}{\pi_{\theta_\text{old}}(a_t|s_t)}$ and $A_t$ is the advantage. In \ourmodel, the observation space, action space and reward function are defined below.
The final goal is to maximize the reward through interactions with the environment described above.
\textbf{Observation Space.} The observation space $S$ contains the full graph $G$, the expected ring number $k_r$, wedge number $k_w$, and the current partition $P$, denoted by $S = \{ G, k_r, k_w, P \}$.
\textbf{Action Space.} 
The agent needs to decide the next partition as action. If it is a Ring Partition, the action is the \emph{radius} of next ring, if it is a Wedge Partition the action is the partition \emph{angle} of the wedge.
\[
A=\left\{
\begin{array}{ll}
r & \text{if \; currently \; expects \; a \; ring \; partition} \\
a & \text{if \; currently \; expects \; a \; wedge \; partition}
\end{array}
\right.
\]
\textbf{Reward Function.} When the partitioning process is incomplete, we assign a reward of 0. Conversely, when the partitioning process is complete, the reward is thereby formulated as the negative of the Normalized Cut, reflecting our objective to minimize this metric, i.e. $r = -NC(G, P)$.
\begin{figure}[tb]
\centering
\includegraphics[width=0.45\textwidth]{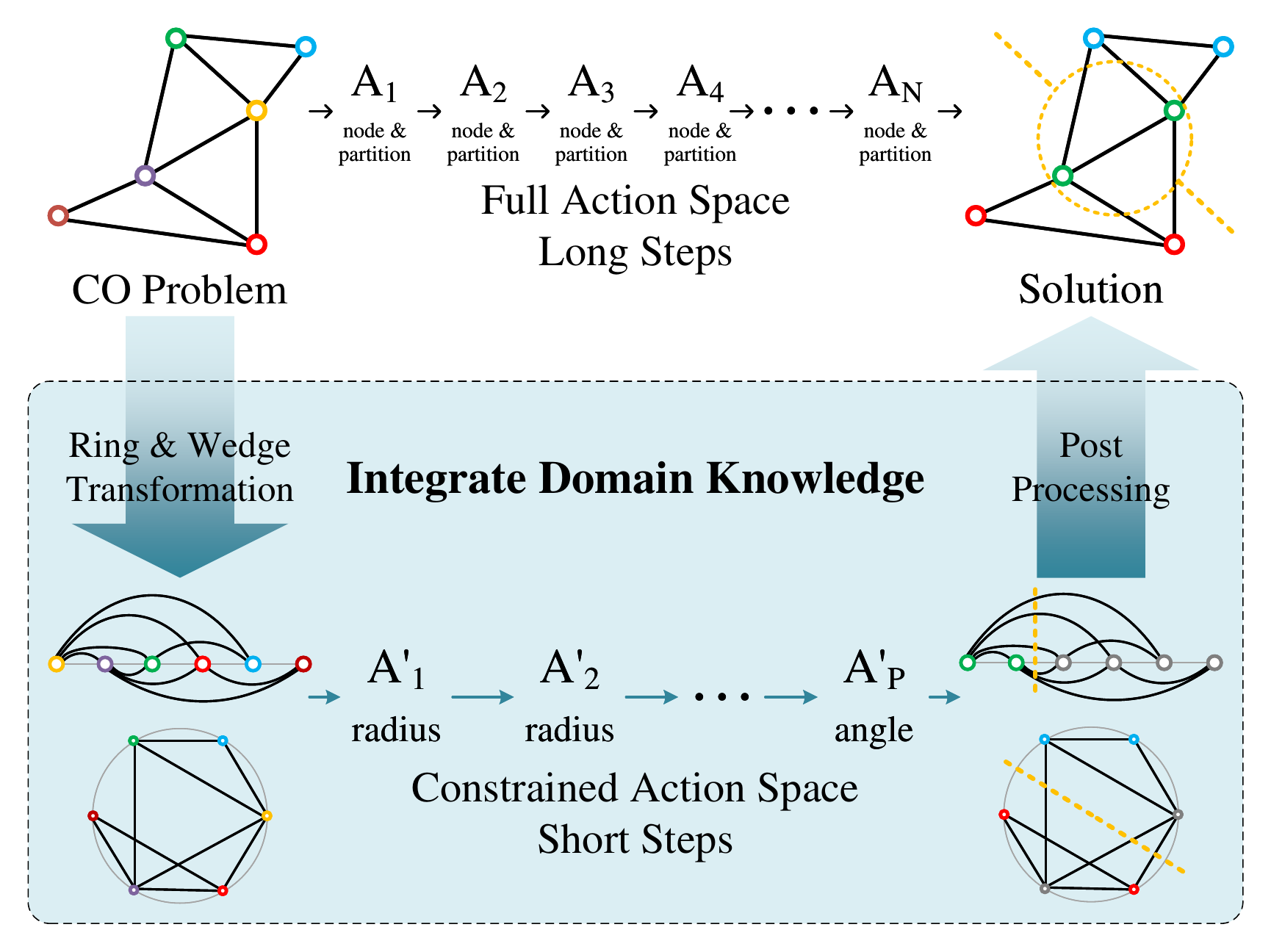}
\caption{We design a solution (\ourmodel) by integrating domain knowledge and learning in the constrained action space (blue box) while existing RL frameworks for combinatorial optimization (CO) problems follow the path above.}
\Description{Workflow diagram contrasting standard RL for combinatorial optimization with RidgeCut's domain-knowledge-constrained action space.}
\label{fig:domain-knowledge}
\end{figure}
\subsection{Transforming the Graph into Rings and Wedges}
\label{subsec:graphtrans}
Most of the previous learning-based graph partitioning methods involve a combination of GNNs and RL \cite{shah2024neurocut}. However, GNNs face limitations as they only aggregate the local structure of the graph, necessitating an initial partition followed by fine-tuning. In contrast, \ourmodel produces ring and wedge partition results directly. To generate a good partition, we aim for \ourmodel to learn the global view of the graph. To achieve this we use a Transformer-based architecture. Recently, Transformers have achieved remarkable success across various domains, utilizing Multi-Head Attention to facilitate global information exchange, thus demonstrating superior performance in diverse tasks. 
However, Transformers typically require sequential input, rendering them 
ill-suited for graph representations. To address this issue, we employ two 
transformations, Ring Transformation and Wedge Transformation, on the graph.  
\textit{These new representations maintain equivalence with the original graph to generate Ring Partitioning or Wedge Partitioning, while making the data sequential to enable the transformer-based model.}
\begin{figure*}[t]
  \centering
  \includegraphics[width=\linewidth]{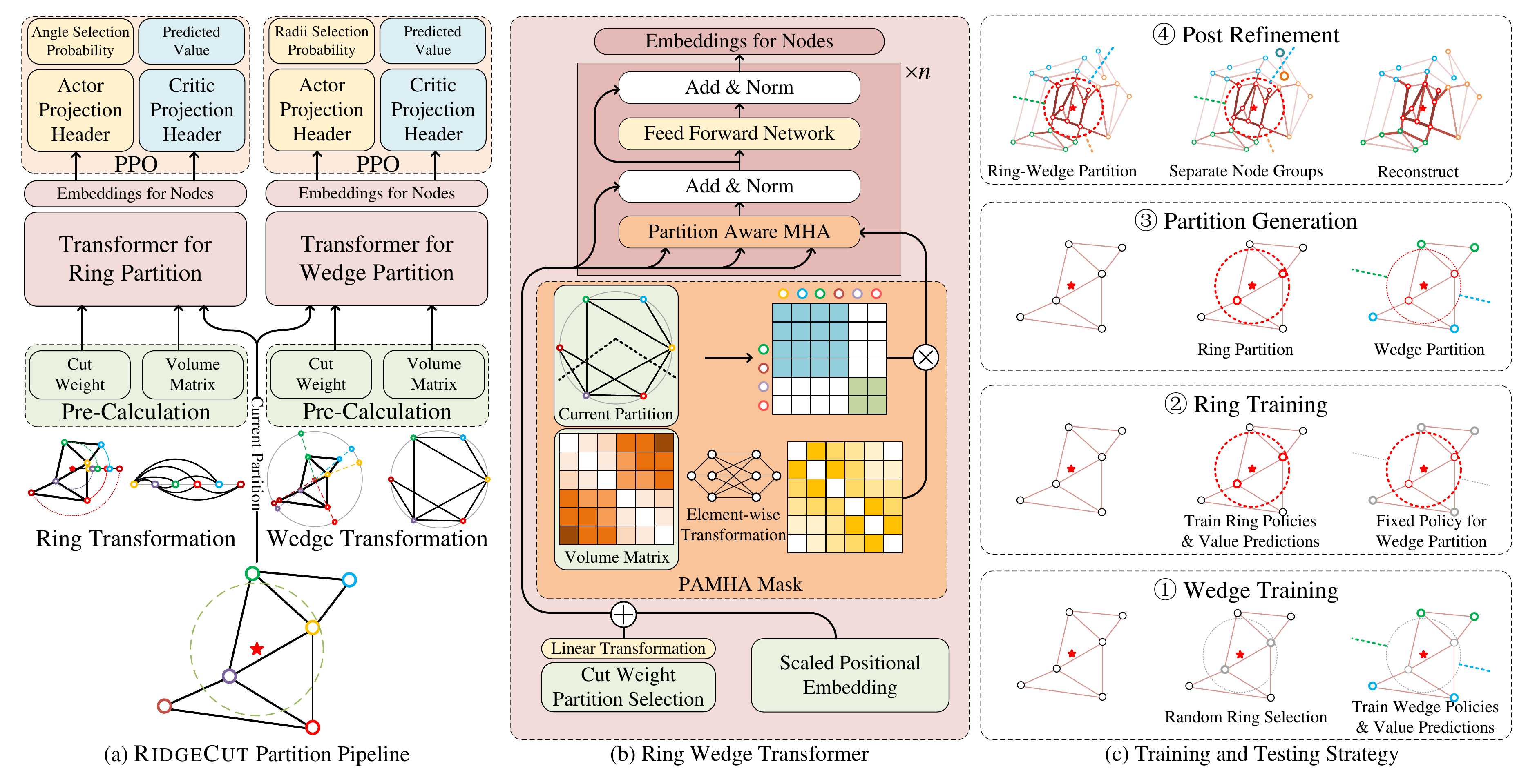}
  \caption{The framework of \ourmodel. (a) Our method first applies Ring and Wedge Transformations, followed by pre-calculation to obtain Cut Weight and Volume Matrix. The processed data generates node embeddings for action probabilities and predicted values via actor and critic projection headers. Modules for ring and wedge partition share structures but differ in weights. (b) Detailed structure of Ring Wedge Transformer, using cut weights with positional embeddings as input, followed by transformer layers. 
  Partition Aware MHA Mask is used, which considers Volume Matrix and 
	  Current Partition to generate appropriate masks.
	 (c) Pipeline from training to testing. Initially, the wedge partition strategy is trained with a random approach for the ring partition, then the wedge part is fixed while training the ring part, excluding its critic projection header. In testing, \ourmodel sequentially determines ring radius and wedge angle, and refines the final partition using post-refinement. 
	  }
\Description{RidgeCut framework diagram showing graph transformations, pre-calculation, transformer-based policy modules, and the training and testing pipeline.}
\label{fig:model}
\end{figure*}
\subsubsection{Ring Transformation.}
First, note that the ring partition remains unchanged when rotating the graph around 
a chosen center $o$, we can project each node on the $x$-axis. More specifically, 
if a node has polar coordinates $(r, \theta)$, the projection will map it 
to the coordinate $(r, 0)$. Figure \ref{fig:model} (a)
\ifextendedversion
and Figure \ref{fig:transform} (b)
\fi
illustrate this projection onto the line.
When the order of nodes on the line remains unchanged, 
we can adjust the radius of any point, and the partition results in the new graph will be identical to those in the original graph. By applying this 
observation, we can transform a standard graph into a simplified version where 
every node is represented by the coordinates $(X, 0)$, where $X$ is the radius 
order of the node among all nodes.
This transformation does not alter the order of the nodes or the existing partitions. 
The final result of this transformation is shown on the left in Figure \ref{fig:model}(a)
\ifextendedversion
and Figure \ref{fig:transform}(b)
\fi
.
\subsubsection{Wedge Transformation.}
Similar to Ring Transformation, we observe that during wedge partition, 
the radius of nodes does not influence the outcome; only the angle of the 
nodes is taken into account. We project all nodes onto a unit circle centered 
at point $o$. Thus, if the polar coordinates of a node are \( (r, X) \), 
its corresponding projection will have coordinates \( (1, X) \). 
After implementing the projection, we also modify the angles of the nodes. 
If a node's angle order is \( X \) among \( N \) nodes, its new polar 
coordinate will be adjusted to \( (1, \frac{2\pi X}{N}) \). 
Figure \ref{fig:model} (a)
\ifextendedversion
and Figure \ref{fig:transform}(c)
\fi
demonstrate the transformation.
\subsubsection{Action Space for RL}With these transformations, nodes of the graph are positioned either along a line or on a circle, allowing us to treat the graph as a sequential input. 
Additionally, actions that split nodes \( i \) and \( i + 1 \) into two 
partitions yield identical final partition results. 
Consequently, we can convert the continuous action space into discrete 
actions to mitigate learning difficulties. A new action \( A_i \) indicates 
the partitioning of nodes \( i \) and \( i + 1 \) into two separate partitions.
\subsection{Partition Pipeline of \ourmodel}
\label{subsec:pipeline}
The graph partition pipeline of \ourmodel is illustrated in Figure \ref{fig:model} (a). It sequentially determines partitions through Ring and Wedge Transformations, iteratively predicting the next ring radius or wedge angle until the target partition count is achieved. The model consists of two components for ring and wedge partitions, sharing similar structures but distinct weights,
and it contains \textbf{four sub-modules.} 
\noindent
\textbf{(i) Transformation:} The appropriate transformation (ring or wedge) is applied based on current requirements. 
\noindent
\textbf{(ii) Pre-Calculation:} Essential computations on the transformed graph include: (1) Cut Weight \(C_i\): Sum of edge weights crossing between nodes \(i\) and \(i+1\). (2) Volume Matrix \(V_{i,j}\): Total weight of edges covered between nodes \(i\) and \(j\) (where \(i < j\)). 
\noindent
\textbf{ (iii) Ring Wedge Transformer Module:} The Transformer processes node embeddings derived from the pre-calculation phase and the current partition status, as depicted in Figure \ref{fig:model} (b).
\ourmodel takes as input the Cut Weight, Volume Matrix, and Current Partition from the Pre-Calculation module. Each node’s selection in the Current Partition is encoded as a 0-1 array, combined with Cut Weight, and passed through a linear layer, followed by positional embeddings. These representations are processed by a stack of Transformer blocks to produce final node embeddings.
To enhance structural awareness, we replace the Multi-Head Attention with Partition-Aware Multi-Head Attention (PAMHA). PAMHA integrates the Volume Matrix and Current Partition into its attention mask. The mask selectively emphasizes relevant node pairs based on their partition status and graph locality. For example, in a circular graph, changes to one segment may not influence distant nodes—this insight is embedded into the attention mechanism by restricting attention to the effective node range. The final node embeddings generated by \ourmodel are then passed to the PPO module for policy optimization.
\ifextendedversion
\textit{Additional details on this module are provided in Appendix \ref{app:transformer}.}
\fi
\noindent
\textbf{(iv) PPO Header:} After receiving node embeddings, the PPO header extracts action probabilities and critic values. The actor projection header maps hidden size \(h\) to dimension 1, followed by a Softmax layer for action probabilities. Value prediction uses Self-Attention average pooling on node embeddings and projects from \(h\) to 1. 
The model will perform one action with one forward,
and is employed to execute actions recursively until the graph is fully partitioned.
Also, during the Ring Partition phase, a dynamic programming algorithm calculates the optimal partition when the maximum radius and total ring count are fixed, with a complexity of \(O(n^2k)\). 
Thus, \ourmodel determines the maximum radius for all ring partitions only once.
\ifextendedversion
The pseudo-code is available in Appendix \ref{app:pseudo}.
\fi
\textbf{Training and Testing.} We employ specialized training and testing strategies to enhance policy learning and improve partition results. Both the training and testing processes are divided into two stages. A visualization of these four stages is depicted in Figure \ref{fig:model} (c).
\ifextendedversion
\textit{Training and testing details are provided in Appendix \ref{app:traintest}.}
\fi

\section{Experimental Results}
\label{sec:exp}
To demonstrate the performance of our proposed \ourmodel, 
which integrates domain knowledge about ring and wedge partitions, we evaluate our model along with other baselines using both synthetic and real-world graphs.
We first introduce the dataset details, 
then give the competitors in graph partitioning, 
and finally show the overall performance and ablation studies results. 
The code is available at \url{https://github.com/zyr17/RIDGECUT}.
\subsection{Set up}
\subsubsection{Graph Datasets}
\label{subsec:dataset}
To evaluate methods precisely, we construct three graph datasets. The datasets are defined as follows:
\noindent\textbf{Predefined-weight Graph:}
This dataset resembles a spider web and consists of $N$ concentric circles, each having $M$ equally spaced nodes. 
The radii of circles are from 1 to $N$. 
We build unweighted spider web graphs with randomly chosen circles and number of nodes, 
then apply a random valid ring-wedge partition,
specifying both the number of rings and wedges.
We then assign lower weights to edges that cross different partitions and higher weights to edges within the same partition (intra-partition edges). 
A synthetic example is given in Figure \ref{fig:both_graphs} (a).
\ifextendedversion
\textit{Details on ranges of nodes, circles, weights etc, for generating the graphs are in Appendix \ref{app:stat}}. 
\fi
\begin{figure}
    \centering
    \includegraphics[scale=0.06]{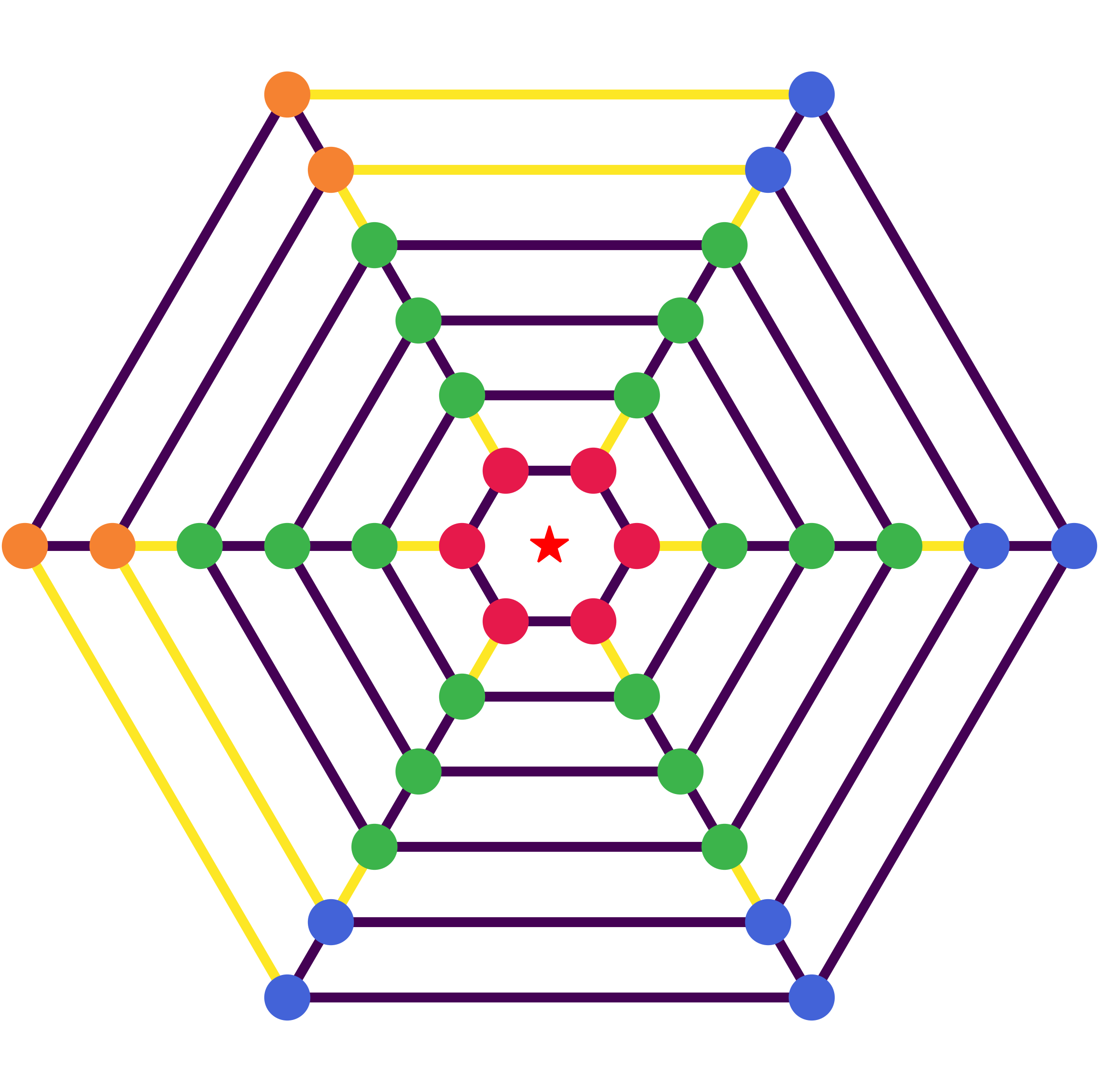}
    \hspace{1cm}\includegraphics[scale=0.02]{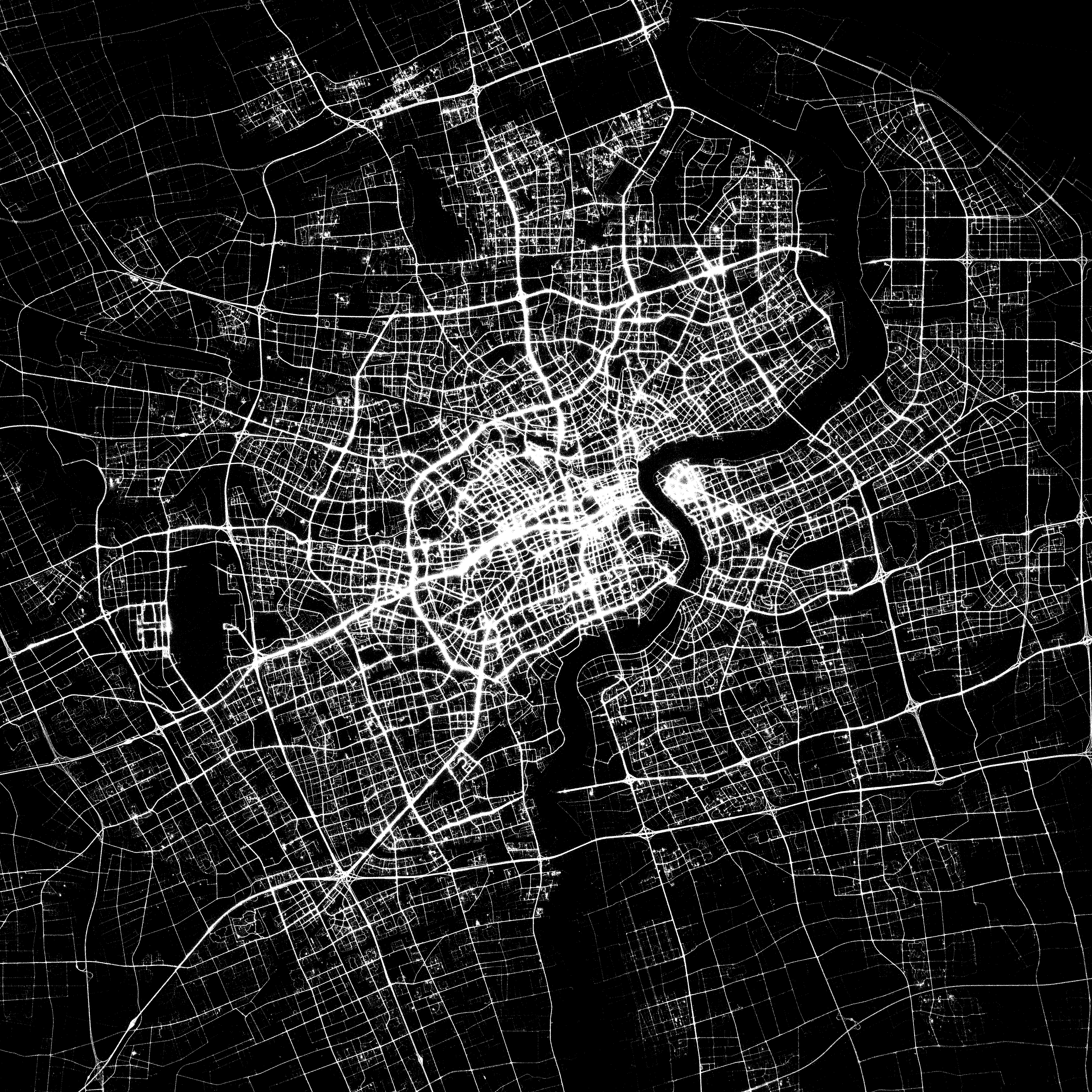}
    \caption{Visualization of graphs used in experiments. (Left) Synthetic graph: 6 circles and 6 nodes on circle. 
        Yellow edges are lower weights, others are higher weights. 
        Current partition 2 rings and 2 wedges. (Right) City Traffic Map: Overview of a real traffic map. We randomly sample connected sub-graphs for training and testing. }
    \Description{Examples of a synthetic ring-wedge graph and a sampled real city traffic graph used in the experiments.}
    \label{fig:both_graphs}
\end{figure}
\noindent\textbf{Random-weight Graph:} 
The graph structure is the same as above, but edge weights are assigned randomly in a given range.
In the random-weight graphs, models should find the best partition without prior knowledge.
\ifextendedversion
The statistics of our training and test synthetic graphs are shown in Table \ref{tab:stas}.
\fi
\noindent\textbf{Real City Traffic Graph:}
For real-world data, we utilize sub-graphs randomly extracted from a comprehensive city traffic map (Figure \ref{fig:both_graphs} (b)). 
The extracted sub-graph is always connected.
For edge weights, we collect traffic data of the city during a specific time range to assess our method's ability to handle real traffic effectively.
\ifextendedversion
The statistical information of Real City Traffic Graph can be found in Table \ref{tab:spider_graph_specs}.
\fi
The real-world traffic dataset used in our experiments was provided by a third-party data partner under a data-use agreement. Due to licensing restrictions, the raw data cannot be publicly released. We report aggregated statistics and experimental results only and do not disclose proprietary raw records.
\begin{table}[t]
\footnotesize
\centering
\caption{Performance on Predefined-weight, Random-weight, and City Traffic Graphs by Normalized Cut. 
Lower values indicate better performance. 
}
\label{tab:overall_transformed}
\resizebox{\columnwidth}{!}{
\begin{tabular}{@{}l*{6}{c}@{}}
\toprule
\multirow{4}{*}{Method} & \multicolumn{6}{c}{4 Partition} \\
\cmidrule(l){2-7}
& \multicolumn{2}{c}{Predefined-weight} & \multicolumn{2}{c}{Random-weight} & \multicolumn{2}{c}{City Traffic} \\
\cmidrule(lr){2-3} \cmidrule(lr){4-5} \cmidrule(l){6-7}
& 50 & 100 & 50 & 100 & 50 & 100 \\
\midrule
Metis & .069 & {.036} & {.065} & .033 & {.245} & .162 \\
Spec. Clust. & .065 & {.036} & .079 & .041 & .384 & .218 \\
Bruteforce & .070 & .036 & {.070} & .036 & .361 & .237 \\
Random & .076 & .040 & .080 & .041 & {.209} & {.095} \\
NeuroCut & \underline{.059} & \underline{.032} & \underline{.064} & \underline{.033} & \underline{{.192}} & \underline{{.078}} \\
ClusterNet & .078 & .043 & .093 & .043 & {.507} & .261 \\
\ourmodel & \textbf{{.042}} & \textbf{.021} & \textbf{{.057}} & \textbf{{.029}} & {\textbf{.174}} & \textbf{{.060}} \\
\hline\hline
\multirow{4}{*}{Method} & \multicolumn{6}{c}{6 Partition} \\
\cmidrule(l){2-7}
& \multicolumn{2}{c}{Predefined-weight} & \multicolumn{2}{c}{Random-weight} & \multicolumn{2}{c}{City Traffic} \\
\cmidrule(lr){2-3} \cmidrule(lr){4-5} \cmidrule(l){6-7}
& 50 & 100 & 50 & 100 & 50 & 100 \\
\midrule
Metis & {.097} & {.053} & .094 & \underline{.049} & {.383} & {.304} \\
Spec. Clust. & .099 & {.053} & .101 & .053 & .652 & .843 \\
Bruteforce & .106 & .054 & .107 & .054 & .615 & .457 \\
Random & .144 & .074 & .142 & .072 & .512 & .341 \\
NeuroCut & \underline{.086} & \underline{.046} & \underline{.093} & \underline{.049} & \underline{.348} & \underline{.226} \\
ClusterNet & .106 & .070 & .120 & {.083} & .837 & .747 \\
\ourmodel & \textbf{{.062}} & \textbf{{.032}} & {\textbf{.081}} & {\textbf{.041}} & {\textbf{.317}} & {\textbf{.182}} \\
\bottomrule
\end{tabular}
}
\end{table}
\subsubsection{Models and Compared Methods}
We compare our proposed method with the following baselines and methods. 
For traditional approaches, we select:
\textbf{METIS}, a solver that is used to partition graphs with balanced size. 
\textbf{Spectral Clustering} uses eigenvectors and k-means to perform graph partitioning.
We also propose two baselines for ring and wedge partitions:
\textbf{Bruteforce} method to enumerate possible ring and wedge partitions. 
\textbf{Random} to randomly generate 10,000 partitions and choose the best-performing one as the result.
For Reinforcement Learning based graph partitioning methods, we select two state-of-the-art methods, 
\textbf{ClusterNet} and \textbf{NeuroCUT}, which are introduced in Section \ref{sec:related-work}.
Finally, we compare the above methods with our proposed \ourmodel.
\ifextendedversion
We also compare these methods with the variants of \ourmodel, and the results are
in Appendix \ref{app:ablation} due to the space limitation.
\fi
\begin{table}[t]
\footnotesize
\centering
\caption{
Results on performance for Inductivity on node sizes for \ourmodel. We train on 100 nodes and test on 50 or 200 nodes. The baselines are not inductive and non-learning based, and still \ourmodel outperforms them.
}
\label{tab:zeroshot_transformed}
\resizebox{\columnwidth}{!}{
\begin{tabular}{@{}l*{6}{c}@{}}
\toprule
\multirow{3}{*}{Method} & \multicolumn{6}{c}{4 Partition} \\
\cmidrule(l){2-7}
& \multicolumn{2}{c}{Predefined-weight} & \multicolumn{2}{c}{Random-weight} & \multicolumn{2}{c}{City Traffic} \\
\cmidrule(lr){2-3} \cmidrule(lr){4-5} \cmidrule(l){6-7}
& 50 & 200 & 50 & 200 & 50 & 200 \\
\midrule
METIS & \underline{.069} & .019 & \underline{.065} & \underline{.016} & .245 & \underline{.048} \\
Bruteforce & .070 & \underline{.018} & .070 & .018 & .361 & .175 \\
Random & .076 & .021 & .080 & .021 & \underline{.209} & .512 \\
\ourmodel & \textbf{.052} & \textbf{.013} & \textbf{.061} & \textbf{.016} & \textbf{.158} & \textbf{.023} \\
\hline\hline
\multirow{3}{*}{Method} & \multicolumn{6}{c}{6 Partition} \\
\cmidrule(l){2-7}
& \multicolumn{2}{c}{Predefined-weight} & \multicolumn{2}{c}{Random-weight} & \multicolumn{2}{c}{City Traffic} \\
\cmidrule(lr){2-3} \cmidrule(lr){4-5} \cmidrule(l){6-7}
& 50 & 200 & 50 & 200 & 50 & 200 \\
\midrule
METIS & \underline{.097} & \underline{.027} & \underline{.094} & \underline{.024} & \underline{.383} & \underline{.086} \\
Bruteforce & .106 & .028 & .107 & .028 & .615 & .311 \\
Random & .144 & .037 & .142 & .037 & .512 & .212 \\
\ourmodel & \textbf{.066} & \textbf{.017} & \textbf{.087} & \textbf{.022} & \textbf{.323} & \textbf{.085} \\
\bottomrule
\end{tabular}

}
\end{table}
{
\begin{table}[t]
\footnotesize
\centering
\caption{Results on Ringness and Wedgeness evaluation }
\label{tab:ringness-wedgeness}
\resizebox{\columnwidth}{!}{
\begin{tabular}{cccccc}
\toprule
          & METIS & Spe.Clu. & NeuroCUT & ClusterNet & \ourmodel   \\ \hline
Ringness  & 0.871 & 0.776       & 0.840    & 0.854      & \textbf{0.929} \\
Wedgeness & 0.587 & 0.810       & 0.621    & 0.820      & \textbf{0.876} \\
\bottomrule
\end{tabular}
}
\end{table}
}
\subsection{Evaluation of Performance}
We show the overall performance in Table \ref{tab:overall_transformed}. We tested our model in three different types of datasets (Section \ref{subsec:dataset}), with 4 or 6 partition numbers.
\ifextendedversion
Dataset statistics are summarized in Table \ref{tab:stas}.
\fi
The number of graphs used for training is $400,000$.
We test the performance of different methods on 100 randomly generated graphs and report the average performance.
We find that our method consistently performs best on all datasets, showing the advantage of the reduced ring-wedge shaped action space.
Although Metis and Spectral Clustering can give graph partitions with any shape, they still cannot reach better performance compared with our proposed method, because it is hard to find best results in such huge action space.
Two basic methods, Bruteforce and Random, 
although they follow the ring and wedge partition,
their performances are always worst compared with other methods, 
because they do not consider the differences of edge weights, and only do random partitioning.
\subsection{Inductivity on Node Sizes}
\begin{table}[t]
\footnotesize
    \centering
    \caption{Inference time comparison of different methods}
    \label{tab:inference}
    \resizebox{.95\columnwidth}{!}{
    \begin{tabular}{c|ccccc}
        \toprule
        Size & METIS & Spectral & NeuroCUT & ClusterNet & \ourmodel \\
        \hline
        50 & 0.073 & 0.011 & 0.845 & 0.140 & 0.050 \\
        100 & 0.062 & 0.034 & 1.596 & 0.145 & 0.073 \\
        200 & 0.077 & 0.507 & 2.512 & 0.140 & 0.261 \\
        \bottomrule
    \end{tabular}
    }
\end{table}
We train the model on three types of graphs, each containing a fixed number 
of nodes (N=100) within each circle. We then evaluate inductive transfer on graphs with different node counts (N = 50 and N = 200) without any fine-tuning or retraining. This setting tests whether the learned partitioning policy can generalize across graph sizes. The results 
presented in Table \ref{tab:zeroshot_transformed} demonstrate that our model 
exhibits significant generalizability and outperforms the non-neural traditional baselines even if they are applied directly on these graphs. These results indicate that \ourmodel trained on graphs of a specific size can be effectively applied to graphs of different sizes. 
\subsection{Ringness and Wedgeness Evaluation}
Ringness and Wedgeness are quantitative indicators (which measure how well the generated partitions align with concentric and radial structural patterns) used to evaluate 
the proximity of a partition to Ring and Wedge partitions, 
from ring and wedge viewpoint respectively.
Ringness and Wedgeness are defined as normalized scores comparing the generated partitions to ideal concentric-ring and radial-wedge layouts.
\ifextendedversion
Detailed definitions are provided in Sec. \ref{app:xness}.
\fi
Table \ref{tab:ringness-wedgeness} presents the quantification results 
of Ringness and Wedgeness across City Traffic Graphs.  Our \ourmodel consistently achieves higher scores across both metrics, indicating that its partitions more closely follow the ring- and wedge-like structures motivating the proposed framework. In contrast, baseline methods that rely on unconstrained or node-level operations exhibit substantially lower scores, suggesting that optimizing cut objectives alone is insufficient to recover global geometric structure. These results highlight the importance of embedding domain knowledge directly into the action space.
\subsection{Inference Time and Scalability}
Table \ref{tab:inference} shows inference time of our method and competitors on City Traffic graphs with 4 partitions.
We can observe that METIS and ClusterNet have relatively low and stable running time; Spectral Clustering, while also having a shorter running time in the experiments, exhibits a rapid increase based on node number. The recent neural baseline NeuroCUT takes a longer time and shows a significant growth as the number of points increases. Our method \ourmodel takes relatively longer than METIS and ClusterNet, but is significantly faster than NeuroCUT. 
\textbf{Scalability.} To scale to larger graphs, we perform two different optimizations focusing on both the input and the model. \\
\textit{(i) Pruning:} We provide a pruning method which decreases the node number without hurting the performance of \ourmodel.
\ifextendedversion
Details are in Appendix \ref{app:shorten}.
\fi
\\ \textit{(ii) Linear-time backbone + NC summary.} We replace the quadratic Transformer backbone with a Mamba state-space model~\cite{gu2024mamba} (linear time/memory  over ordered nodes). Since the normalized cut (NC) attention required an $O(n^2)$ NC matrix, we remove it and inject a compact per-node NC summary (mean, max, top-$k$) computed in $O(n)$ without materializing dense matrices. The training reward is still the exact NC, computed from edges and the partition labels (no NC matrix). We train with the same two-stage schedule as the original method (Stage~1 and Stage~2 PPO, $2\times10^5$ episodes each), and then evaluate on larger graphs.
Table~\ref{tab:1000_nodes_scaling} reports results on 1000-node graphs as we vary the number of evaluation graphs. Across evaluation sizes, \ourmodel consistently achieves lower NC than NeuroCut (better partition quality), while remaining within the same order of magnitude of inference time (slower than NeuroCut but still in the millisecond range). This indicates our scalability changes preserve quality while keeping inference practical at scale.
\begin{table}[t]
\footnotesize
\centering
\caption{Performance on 1000-node graphs varying the number of sample graphs. Best values per row (lower is better) are in bold. Our method \ourmodel outperforms the learning-based baseline NeuroCut while still being efficient in running time.}
\label{tab:1000_nodes_scaling}
\resizebox{\columnwidth}{!}{
\begin{tabular}{c|cc|cc}
\toprule
\multirow{2}{*}{$N$} &
\multicolumn{2}{c|}{NeuroCut} &
\multicolumn{2}{c}{RIDGECUT} \\
\cline{2-5}
& NC & Time (s) & NC & Time (s) \\
\hline
200  & $4.2394 \pm 6.8576$ & $\mathbf{0.0033 \pm 0.0009}$ & $\mathbf{1.5949 \pm 3.4943}$ & $0.0085 \pm 0.0107$ \\
300  & $4.7746 \pm 7.6209$ & $\mathbf{0.0031 \pm 0.0008}$ & $\mathbf{1.4851 \pm 3.0580}$ & $0.0077 \pm 0.0091$ \\
400  & $4.8585 \pm 7.7001$ & $\mathbf{0.0032 \pm 0.0009}$ & $\mathbf{1.3480 \pm 2.7157}$ & $0.0085 \pm 0.0105$ \\
1000 & $5.4930 \pm 8.2696$ & $\mathbf{0.0033 \pm 0.0009}$ & $\mathbf{1.2563 \pm 2.1537}$ & $0.0078 \pm 0.0087$ \\
\bottomrule
\end{tabular}
}
\end{table}
\subsection{Ablation Studies}
We ablate \ourmodel and its variants.
Specifically:
\textbf{\ourmodel} is the standard Wedge-Ring Partition with two-stage training.
\textbf{\ourmodel$_{e2e}$} directly learns Wedge-Ring Partition without two-stage training.
\textbf{\ourmodel$_{sr}$} uses the same reward function during two training stages.
\textbf{\ourmodel$_{o}$} does not perform post refinement after ring-wedge partition is generated.
\textbf{\ourmodel$_{nfw}$} does not freeze the wedge action network during the second training stage.
Table \ref{tab:overall-ab} presents an ablation study that isolates the contribution of key components in \ourmodel. Removing either the ring-based or wedge-based partitioning primitives leads to a consistent degradation in performance, indicating that both components are necessary to capture complementary structural properties of the graph. Variants that relax the structured action space or replace it with more generic alternatives further exhibit noticeable drops, highlighting the importance of constraining the policy to domain-informed operations. In addition, ablations that modify the training strategy show that increased flexibility does not necessarily translate to better performance, and in some cases hinders learning due to inefficient exploration. Overall, these results demonstrate that \ourmodel's effectiveness arises from the combined design of structured actions and controlled optimization, rather than any single component in isolation.
\begin{table*}[t]
\centering
\caption{Ablation Studies: \textbf{\ourmodel$_{e2e}$} directly learns Wedge-Ring Partition without two-stage training.
\textbf{\ourmodel$_{sr}$} uses the same reward function during two training stages. \textbf{\ourmodel$_{o}$} does not perform post refinement after ring-wedge partition is generated.
\textbf{\ourmodel$_{nfw}$} does not freeze the wedge action network during the second training stage. Performance comparison on Predefined-weight, Random-weight, and City Traffic Graphs (Normalized Cut). Lower values indicate better performance. }

\label{tab:overall-ab}
\begin{tabular}{@{}l*{12}{c}@{}}
\toprule
\multirow{3}{*}{Method} & \multicolumn{4}{c}{Predefined-weight} & \multicolumn{4}{c}{Random-weight} & \multicolumn{4}{c}{City Traffic} \\
\cmidrule(lr){2-5} \cmidrule(lr){6-9} \cmidrule(l){10-13}
& \multicolumn{2}{c}{4 Part.} & \multicolumn{2}{c}{6 Part.} & \multicolumn{2}{c}{4 Part.} & \multicolumn{2}{c}{6 Part.} & \multicolumn{2}{c}{4 Part.} & \multicolumn{2}{c}{6 Part.} \\
\cmidrule(lr){2-3} \cmidrule(lr){4-5} \cmidrule(lr){6-7} \cmidrule(lr){8-9} \cmidrule(lr){10-11} \cmidrule(l){12-13}
& 50 & 100 & 50 & 100 & 50 & 100 & 50 & 100 & 50 & 100 & 50 & 100 \\
\midrule
Metis 
& .069 & .036 & .097 & .053 & .065 & .049 & .094 & .049 & .245 & .162 & .383 & .304 \\
Spec. Clust. 
& .065 & .036 & .099 & .053 & .079 & .053 & .101 & .053 & .384 & .218 & .652 & .843 \\
Bruteforce 
& .070 & .036 & .106 & .054 & .070 & .054 & .107 & .054 & .361 & .237 & .615 & .457 \\
Random 
& .076 & .040 & .144 & .074 & .080 & .072 & .142 & .072 & .209 & .095 & .512 & .341 \\
NeuroCut 
& .059 & .032 & .086 & .046 & .064 & .033 & .093 & .049 & .192 & .078 & .348 & .226 \\
ClusterNet 
& .078 & .043 & .106 & .070 & .093 & .043 & .120 & .083 & .507 & .261 & .837 & .747 \\
\hline
\ourmodel$_{sr}$ 
& .063 & .276 & .065 & .032 & .159 & .044 & .091 & .046 & .646 & .473 & .792 & .612 \\
\ourmodel$_{e2e}$ 
& .105 & .053 & .123 & .063 & .112 & .055 & .131 & .069 & .683 & .478 & .783 & .678 \\
\ourmodel$_o$ 
& \textbf{.042} & \textbf{.021} & \textbf{.062} & \textbf{.032} 
& \textbf{.057} & \textbf{.029} & \textbf{.081} & \textbf{.041} 
& .209 & .071 & .419 & .271 \\
\ourmodel$_{nfw}$ 
& .046 & .023 & .065 & .033 
& \textbf{.057} & \textbf{.029} & .082 & \textbf{.041} 
& .175 & .060 & .328 & .187 \\
\ourmodel 
& \textbf{.042} & \textbf{.021} & \textbf{.062} & \textbf{.032} 
& \textbf{.057} & \textbf{.029} & \textbf{.081} & \textbf{.041} 
& \textbf{.174} & \textbf{.060} & \textbf{.317} & \textbf{.182} \\
\bottomrule
\end{tabular}
\end{table*}
\ifextendedversion
\textit{Additional ablation studies are provided in Appendix \ref{app:ablation}}.
\fi

\section{Conclusion}
We propose \ourmodel, an RL-based framework for graph partitioning that incorporates domain-specific constraints. More specifically, \ourmodel is designed for solving the graph partitioning problem in spatial networks where \textit{good} partitions are naturally shaped as  approximate rings and wedges. We achieve this by first encoding rings and wedges using a sequential structure and use transformer-based embeddings to capture relevant spatial context. The RL model is learned using Proximal Policy Optimization where the reward is the non-differentiable Normalized Cut metric. We demonstrate that \ourmodel generalizes across graph instances and produces more meaningful and higher-quality partitions. One interesting future direction might be to generalize this framework to dynamic graphs.

\begin{acks}
This work was supported by the National Natural Science Foundation of China under Grant No. 62172107 and the National Key Research and Development Program of China under Grant No. 2018YFB\allowbreak0505000.
\end{acks}


\FloatBarrier
\bibliographystyle{ACM-Reference-Format}
\bibliography{sections/qarsumo}

\ifextendedversion
\clearpage
\appendix
\newcommand{\appsection}[1]{
  \refstepcounter{section}
  \section*{\thesection. #1}
}
\appsection{Definitions of Ringness and Wedgeness}
\label{app:xness}
We propose the Ringness and Wedgeness to evaluate whether a partition is close to the ring shape or wedge shape. 
We expect a typical Ring and Wedge partition will have the highest Ringness and Wedgeness. 
For partition $p_i \in P$, 
we define the partition ranges as $pr_i=\{\min(\boldsymbol{r}_i), \max(\boldsymbol{r}_i)\}$, 
partition angle $pa_i=\{\min(\boldsymbol{a}_i), \max(\boldsymbol{a}_i)\}$,
where $\boldsymbol{r}_i$ and $\boldsymbol{a}_i$ are polar coordinates of nodes that belongs to $p_i$.
Then we define Ringness for a partition $R_P(r) = |\{r \in pr_i\}|$, which means that how many partitions cover the radius $r$. 
For a pure ring partition, as different partitions will never overlap within their radius, $R_P(r)$ will be always 1; and for a Ring and Wedge partition, $R_P(r)$ is always 1, except the out-most wedge part, is the wedge partition number $k_w$.
For Wedgeness, we define $W_P(r) = \sum_{r \in pr_i} |pa_i|$, where $|pa_i|$ is the angle range of $pa_i$. 
For radius $r$, we only consider the partition that covers the selected range, and we sum up the angles covered by these partitions. 
The angle should equal or greater than $2\pi$, as the graph is fully partitioned by $P$. 
If a partition is a pure wedge partition, for any $r$, the Wedgeness should be exactly $2\pi$, because partition will never cover each other in any place. For Ring and Wedge Partition, if $r$ is in Wedge Partition part, the conclusion remains same as above; for Ring Partition part, only one partition is selected, and the Wedgeness is also $2\pi$.
To represent Ringness and Wedgeness more clearly, we calculate the quantification metrics for them based on the following formula:
{\scriptsize\begin{align}
\boldsymbol{W'} &= \min_{0 \le k \le \max(\boldsymbol{r})} \left( \int_{i=0}^k W_P(i) + \int_{i=k}^{\max(\boldsymbol{r})} \left( \max(W) - W_P(i) \right) \right)\label{eq:wp1}\end{align}}
\begin{align}\boldsymbol{W}_P &= \frac{Z(P) - \boldsymbol{W'}}{Z(P)} \label{eq:wp}\end{align}
\begin{align}
\boldsymbol{R}_P &= \frac{2\pi}{\max_r R_P(r)} \label{eq:rp} \\
f(x) &= \left\{\begin{array}{lcl}
1 & \text{if} & 0\le x \le k\\
\max(W) & \text{if} & k < x \le max(\boldsymbol{r})
\end{array}\right.
\end{align}
Here $Z(P)=0.5\max(\boldsymbol{r})\cdot\max(W)$ is the normalization factor.
We use a piecewise function $f$ to approximate $W_P$, and provide $\boldsymbol{W}_P$ based on the difference between $W_P$ and $f$.
For $\boldsymbol{R}_P$, we select the maximum of $R_p$.
Both $\boldsymbol{W}_P$ and $\boldsymbol{R}_P$ is scaled to $[0, 1]$, and the higher means the better.

\appsection{Theoretical Analysis: Proofs of Cheeger Bounds}
\label{app:bound}
In the graph partitioning context there exists bounds on the Cheeger constant in terms of the normalized Laplacian eigenvalues, see for example \cite{chung1997spectral} for bisection and \cite{trevisan2014highcheeger} for more general k-partitions. Intuitively, the Cheeger constant measures the size of the minimal ``bottleneck`` of a graph and it is related to the optimal partition. Since we consider a subset of all the possible partition classes, namely ring and wedge, we show that the normalized cut defined in \eqref{eq: ncut_definition} satisfies bounds similar to the classical case in the case of unweighted spider web graphs. Despite being a simpler class of graphs, these bounds give a theoretical justification of the normalized cut definition \eqref{eq: ncut_definition} and the ring-wedge shaped partition.
\subsection{Proof of Proposition\ref{prop: 1}}
\label{app:proof}
In this section we will provide all the details of the proof of Proposition\ref{prop: 1}. First we recall some background definitions and results. 
 Let $G=(V, E) $ be an undirected graph with $|V|=n$. Let $D$ be the diagonal matrix with the node degrees on the diagonal and let $L=D^{-\frac{1}{2}}(D-A)D^{-\frac{1}{2}}$ be the normalized Laplacian of $G$\footnote{For the sake of simplicity, often it will be called just Laplacian.}, where $A$ is the adjacency matrix of $G$. The matrix $L$ is positive semi-definite with eigenvalues
\begin{equation}
0 = \lambda_1 \leq \lambda_2 \leq \ldots \leq \lambda_n.
\end{equation}
For a subset $S\subseteq V$ define
\begin{equation}
\phi_G(S) = \frac{Cut(S, S^c)}{Volume(S)}
\end{equation}
and, for $1\leq k\leq n$, we define the \textit{Cheeger constants} 
\begin{equation}
\rho_G(k) = \min_{\substack{S_1,\ldots, S_k \\ \text{partition of V}}}\max_{1\leq i\leq k}\phi_G(S_i).
\end{equation}
It is known that, for $k=2$, the following inequalities hold \cite{chung1997spectral}
\begin{equation}
\frac{\lambda_2}{2} \leq \rho_G(2) \leq \sqrt{2\lambda_2}.
\end{equation} 
Analogous inequalities were proved in \cite{trevisan2014highcheeger} for every $1\leq k\leq n$
\begin{equation}
\frac{\lambda_k}{2}\leq \rho_G(k) \leq \mathcal{O}(k^2)\sqrt{\lambda_k}.
\end{equation}
If $G$ is planar, then the right-side inequality can be improved and reads
\begin{equation}
\rho_G(k) \leq \mathcal{O}(\sqrt{\lambda_{2k}}).
\end{equation}
Now let $G_{N,r}=(V,E)$ be an undirected spider web graph with $r$ rings and $N$ points for each ring. This is exactly the cartesian product of a circle graph and a path graph with $N$ and $r$ vertices respectively. Note that the ``center`` is not included in this type of graphs. Define the following custom Cheeger constants:
\begin{align}
\varphi_{N, r}(k) & = \min_{\substack{S_1,\ldots, S_k \\ \text{wedge partition of V}}}\max_{1\leq i\leq k}\phi_{G_{N,r}}(S_i)\\
\psi_{N, r}(k) & = \min_{\substack{S_1,\ldots, S_k \\ \text{ring partition of V}}}\max_{1\leq i\leq k}\phi_{G_{N,r}}(S_i).
\end{align}
For an illustration of ring and wedge partitions see Figure \ref{fig:ring_wedge_partitions}.
\begin{figure*}
    \centering
    \includegraphics[scale=0.7]{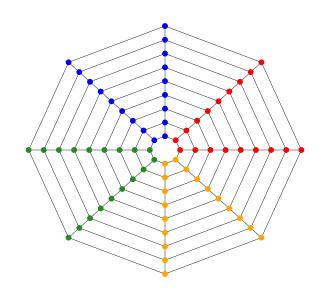}
    \hspace{1cm}\includegraphics[scale=0.7]{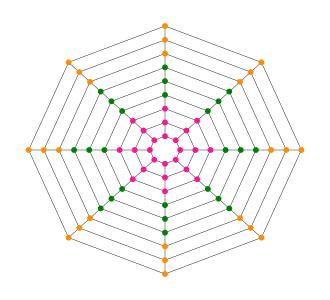}
    \caption{Examples of $k=4$ wedge (left) and $k=3$ ring (right) partitions.}
    \Description{Two spider-web graph examples showing wedge partitions and ring partitions.}
    \label{fig:ring_wedge_partitions}
\end{figure*}
From now on we will assume $G=G_{N,r}$ to be a spider web graph with $r$ rings and $N$ points for each ring. We can compute bounds on $\varphi_{N, r}(k)$ and $\psi_{N, r}(k)$.
\begin{lemma}\label{lem: ringwedge_bounds}
Given a spider-web graph $G_{N,r}$ the ring and wedge Cheeger constants can be bounded as follows
\begin{equation}
\varphi_{N, r}(k) \leq \frac{r}{\lfloor\frac{N}{k}\rfloor (2r-1)}, \quad \psi_{N, r}(k) \leq \frac{1}{2\lfloor\frac{r}{k}\rfloor}.
\end{equation}
\end{lemma}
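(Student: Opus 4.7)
My plan is to exhibit explicit wedge and ring partitions and directly compute their per-block Cheeger ratios, bounding the maximum by the quantities in the statement. The key observation driving all the calculations is that in the spider-web graph $G_{N,r}$ (the Cartesian product of $C_N$ and $P_r$), every node has degree $4$ except the innermost and outermost rings, whose nodes have degree $3$. Hence the total degree contribution of any full ``column'' (one node per ring) is $4r - 2 = 2(2r-1)$.

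First I would handle the wedge bound. Let $m = \lfloor N/k \rfloor$ and partition the cycle of $N$ angular positions into $k$ arcs, the first $k-1$ of length exactly $m$ and the last of length $N - (k-1)m \geq m$; each wedge $S_i$ consists of all nodes sitting on one such arc of consecutive angular positions. For any such wedge:
\begin{itemize}
\item its volume is $|\mathrm{arc}|\cdot 2(2r-1)$ by the degree count above;
\item its cut to the rest of the graph consists solely of the two ``circumferential'' edges leaving the wedge in each of the $r$ rings, giving exactly $2r$ edges (the radial edges stay inside the wedge).
\end{itemize}
Therefore $\phi_{G_{N,r}}(S_i) = \frac{2r}{|\mathrm{arc}|\cdot 2(2r-1)} = \frac{r}{|\mathrm{arc}|(2r-1)}$, which is maximized on the smallest wedge, giving the claimed bound $\varphi_{N,r}(k) \leq \frac{r}{m(2r-1)}$.

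For the ring bound, let $s = \lfloor r/k \rfloor$ and group the rings into $k$ radial bands of consecutive rings, using size $s$ for $k-1$ of them and absorbing the remaining $r - (k-1)s \geq s$ rings into one boundary band. For a band $S_i$ containing $t \geq s$ consecutive rings I would split into two cases. If the band touches neither ring $1$ nor ring $r$, its volume is $4Nt$ and its cut is $2N$ (one radial edge per angular position on each radial boundary), so $\phi_{G_{N,r}}(S_i) = \frac{1}{2t} \leq \frac{1}{2s}$. If the band touches a boundary ring, one radial side contributes no cut and one ring contributes degree $3$ instead of $4$; a short computation gives $\phi_{G_{N,r}}(S_i) = \frac{1}{4t - 1} \leq \frac{1}{2s}$ for $t \geq 1$. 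Taking the maximum over the $k$ bands yields $\psi_{N,r}(k) \leq \frac{1}{2s}$.

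The argument is essentially a direct construction plus bookkeeping, so the main obstacle is not any conceptual difficulty but ensuring the edge-counting is correct at the boundary rings and that the ``leftover'' nodes, when $k$ does not divide $N$ or $r$, are absorbed into a block whose Cheeger ratio only decreases. The verification that enlarging a wedge or band strictly decreases its $\phi$ (because the cut stays the same while the volume grows) is what makes the smallest block the binding one, and underpins the use of $\lfloor N/k \rfloor$ and $\lfloor r/k \rfloor$ in the statement.
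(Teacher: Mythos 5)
Your proposal is correct and follows essentially the same route as the paper's proof: exhibit an explicit (maximally symmetric) wedge or ring partition, compute each block's cut and volume by degree counting (a full column contributes $2(2r{-}1)$ to the volume, each wedge has cut $2r$, interior bands have cut $2N$ and boundary bands $N$), and bound the maximum ratio by the smallest block. The only cosmetic difference is that you absorb the leftover nodes into a single larger block rather than distributing them as blocks of size $\lfloor\cdot\rfloor$ or $\lfloor\cdot\rfloor+1$, which changes nothing in the bound.
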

\begin{proof}
The strategy will be to choose suited partitions for which it is possible to compute explicitly the cuts and the volumes. We start from the wedge Cheeger constant $\varphi_{N,r}(k)$. Given a wedge partition $S_1, \ldots, S_k$ each subset $S_i$ has cut exactly $2r$. Moreover, we assume that the $S_i$'s are maximally symmetric, meaning that the $S_i$'s all have $\lfloor \frac{N}{k}\rfloor$ or $\lfloor\frac{N}{k}\rfloor+1$ nodes in each ring. These observation read
\begin{equation}
\begin{split}
\varphi_{N, r}(k) \leq & \max_{1\leq i\leq k}\frac{2r}{Volume(S_i)} \\ 
 = & \frac{2r}{\min_{1\leq i\leq k}Volume(S_i)}.
\end{split}
\end{equation}
The wedge subset with minimum volume is given by  one that has $\lfloor{\frac{N}{k}}\rfloor$ nodes in each ring, hence
\begin{equation}
\begin{split}
\varphi_{N, r}(k) \leq & \frac{2r}{\underbrace{4}_{\substack{\text{degree of}\\ \text{inner ring} \\ \text{nodes}}}\underbrace{(r-2)}_{\substack{\text{number of}\\ \text{inner rings}}}\underbrace{\lfloor{\frac{N}{k}}\rfloor }_{\substack{\text{number of}\\ \text{points in} \\ \text{each ring}}}+ \underbrace{3}_{\substack{\text{degree of} \\ \text{outer ring} \\ \text{nodes}}} \underbrace{2}_{\substack{\text{number of} \\ \text{outer rings}}} \underbrace{\lfloor{\frac{N}{k}}\rfloor }_{\substack{\text{number of}\\ \text{points in} \\ \text{each ring}}}} \\
= & \frac{r}{\lfloor\frac{N}{k}\rfloor(2r-1)}.
\end{split}
\end{equation}
For the ring Cheeger constant the setting is more complicated since different subsets might have different cut, in contrast with the case of wedge partitions. Given a ring partition $S_1, \ldots, S_k$ which is maximally symmetric, i.e., all the $S_i$'s have $\lfloor\frac{r}{k}\rfloor$ or $\lfloor\frac{r}{k}\rfloor +1$ nodes in each ring, we order the $S_i$'s from the center to the outermost ring. Note that $S_1$ has some nodes with degree $3$ while $S_2$ has all nodes with degree $4$ for $k>2$. For $k$> 2, we consider two cases:
\begin{itemize}
\item $k$ divides $r$. In this case we only need to compare $S_1$ and $S_2$. It holds that
\begin{align}
    \phi_G(S_1) = & \frac{N}{4N(\frac{r}{k}-1)+3N} = \frac{1}{4\frac{r}{k}-1} \\ 
    \phi_G(S_2) = & \frac{2N}{4N\frac{r}{k}} = \frac{1}{2\frac{r}{k}},
\end{align}
since $S_1$ and $S_2$ have cut $N$ and $2N$ respectively. Thus, $\phi_G(S_2)\geq \phi_G(S_2)$ which implies $\psi_{N,r}(k)\leq \frac{1}{2\frac{r}{k}}$.
\item $k$ does not divide $r$. In this case, we assume $S_1$ has $\lfloor\frac{r}{k}\rfloor +1$ nodes and $S_2$ has $\lfloor\frac{r}{k}\rfloor$ nodes. Then
\begin{align}
    \phi_G(S_1) = & \frac{N}{4N\lfloor\frac{r}{k}\rfloor+3N} = \frac{1}{4\lfloor\frac{r}{k}\rfloor+3} \\ 
    \phi_G(S_2) = & \frac{2N}{4N\lfloor\frac{r}{k}\rfloor} = \frac{1}{2\lfloor\frac{r}{k}\rfloor}.
\end{align}
Thus, $\phi_G(S_2)\geq \phi_G(S_2)$ which implies $\psi_{N,r}(k)\leq \frac{1}{2\lfloor\frac{r}{k}\rfloor}$.
\end{itemize}
For $k=2$, if $k$ divides $r$, then $\phi_G(S_1)=\phi_G(S_2)=\frac{1}{4\frac{r}{k}\rfloor-1}\leq \frac{1}{2\frac{r}{k}}$. If $k$ does not divide $r$, then if $S_1$ has $\lfloor\frac{r}{k}\rfloor +1$ nodes and $S_2$ has $\lfloor\frac{r}{k}\rfloor$ nodes, we have $\phi_G(S_1)= \frac{1}{4\lfloor\frac{r}{k}\rfloor+3}$ and $\phi_G(S_2) = \frac{1}{4\lfloor\frac{r}{k}\rfloor-1} \leq \frac{1}{2\lfloor\frac{r}{k}\rfloor}$.
Putting together the above inequalities, we get that $\psi_{N,r}(k)\leq\frac{1}{2\lfloor\frac{r}{k}\rfloor}$.\end{proof}
In the next sections we will provide the proof details for the two bounds in Proposition \ref{prop: 1}. We start from the case of wedge partitions. 
\subsubsection{Wedge partitions}
We will prove the bound on wedge Cheeger constants in terms of the eigenvalues of the circle graph with $N$ vertices $C_N$. We will consider only the case of $k>1$ since the first eigenvalues is always $0$ and spider web graphs are connected. First we recall that the eigenvalues of $C_N$ are
\begin{equation}
1-\cos\left(\frac{2\pi k}{N}\right), \quad 0\leq k\leq N-1,
\end{equation}
see \cite{chung1997spectral}. In particular, we have the following result.
\begin{lemma}\label{lem:1}
Let $C_N$ be the circle graph with $N$ vertices. Then the $k$-th eigenvalues of the normalized Laplacian of $C_N$ is given by
\begin{equation}
\lambda^C_k = 1 - \cos\left( \frac{ 2 \pi \lfloor\frac{k}{2}\rfloor }{N} \right), \quad 1\leq k\leq N.
\end{equation} 
\end{lemma}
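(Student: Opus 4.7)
The strategy is to start from the already-cited spectrum $\{1 - \cos(2\pi j/N) : 0 \leq j \leq N-1\}$ of the normalized Laplacian of $C_N$ (this follows from diagonalizing the circulant adjacency matrix by the discrete Fourier basis; $C_N$ is $2$-regular so $L = I - \tfrac{1}{2}A$) and re-index this multiset by sorted order.

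First I would record the symmetry $\cos(2\pi j/N) = \cos(2\pi (N-j)/N)$, which pairs the index $j$ with $N-j$. This immediately shows that the distinct eigenvalues are exactly
\begin{equation}
\mu_j := 1 - \cos\!\left(\tfrac{2\pi j}{N}\right),\qquad 0 \leq j \leq \lfloor N/2\rfloor,
\end{equation}
with $\mu_0 = 0$ simple, $\mu_{N/2} = 2$ simple when $N$ is even, and every other $\mu_j$ of multiplicity $2$. A quick count gives $1 + 2(\lfloor N/2\rfloor - 1) + 1 = N$ for $N$ even and $1 + 2\cdot\tfrac{N-1}{2} = N$ for $N$ odd, so no eigenvalues are missed.

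Next I would use that $t\mapsto 1-\cos(2\pi t/N)$ is strictly increasing on $[0,N/2]$, so sorting the above list in nondecreasing order yields the sequence $\mu_0,\mu_1,\mu_1,\mu_2,\mu_2,\ldots$ in which $\mu_j$ appears twice for $1 \leq j \leq \lfloor (N-1)/2\rfloor$ and $\mu_{\lfloor N/2\rfloor}$ appears once or twice according to the parity of $N$. In either parity case, the position of $\mu_j$ in the sorted list is at indices $2j$ and $2j+1$ (truncated at $N$), which is exactly the statement that the $k$-th eigenvalue equals $\mu_{\lfloor k/2\rfloor}$ for $1 \leq k \leq N$.

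The only delicate point is to verify that the formula $\lambda^C_k = 1 - \cos(2\pi \lfloor k/2\rfloor/N)$ hits the simple eigenvalue $2$ exactly once when $N$ is even: evaluating at $k = N$ gives $\lfloor N/2\rfloor = N/2$ and hence $1 - \cos\pi = 2$, while $k=N-1$ gives $\lfloor (N-1)/2\rfloor = N/2 - 1 < N/2$, so $2$ appears only at position $N$, as required. I do not expect any substantive obstacle — the whole argument is just a bookkeeping check on the parity of $N$ and the pairing induced by the cosine symmetry.
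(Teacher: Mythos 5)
Your proposal is correct and follows essentially the same route as the paper: both start from the known spectrum $\{1-\cos(2\pi j/N)\}_{j=0}^{N-1}$ of $C_N$ and re-index it in sorted order to obtain $\lambda^C_k = 1-\cos(2\pi\lfloor k/2\rfloor/N)$. Your version simply makes explicit the cosine symmetry, the multiplicity count, and the parity check that the paper's proof asserts without detail.
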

\begin{proof} If we order the values of $\left\{1-\cos\left(\frac{2\pi (k-1)}{N}\right)\right\}_{k=1}^{N}$ we notice that
\begin{equation}\label{eq: lambda_sep}
\lambda^C_k = \left\{\begin{array}{lcl}
f(\frac{k}{2}) & \text{if} & k\in 2\mathbb{Z} \\
f(\frac{k-1}{2}) & \text{if} & k\notin 2\mathbb{Z}
\end{array}\right.
\end{equation}
where $f(k) = 1-\cos\left(\frac{2\pi k}{N}\right)$. Writing together the two pieces in \eqref{eq: lambda_sep} we get
\begin{equation}
\lambda^C_k = 1 - \cos\left( \frac{ 2 \pi \lfloor\frac{k}{2}\rfloor }{N} \right),\quad  1\leq k\leq N.
\end{equation} 
\end{proof}
Now we will prove some inequalities that together will build the final wedge Cheeger inequality.
\begin{lemma}\label{lem:2}
$\pi\lfloor\frac{k}{2}\rfloor\frac{1}{N} \leq \frac{\pi}{2}$, for $2\leq k \leq N$.
\end{lemma}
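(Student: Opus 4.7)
The plan is to prove this by a short chain of elementary inequalities on the floor function, since the statement is really just $\lfloor k/2 \rfloor / N \le 1/2$ dressed up with a factor of $\pi$ on both sides.

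First, I would cancel the common factor $\pi$ and reduce the claim to showing
\begin{equation}
\frac{\lfloor k/2 \rfloor}{N} \le \frac{1}{2} \quad \text{for } 2 \le k \le N.
\end{equation}
Next, I would use the standard bound $\lfloor x \rfloor \le x$ with $x = k/2$, giving $\lfloor k/2 \rfloor \le k/2$. Dividing by $N$ yields $\lfloor k/2 \rfloor / N \le k/(2N)$. Finally, the hypothesis $k \le N$ gives $k/(2N) \le 1/2$, and chaining the two inequalities closes the argument. Multiplying back by $\pi$ recovers the stated bound.

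There is no real obstacle here; the only thing to be careful about is that the inequality $\lfloor k/2 \rfloor \le k/2$ is tight when $k$ is even (for instance $k = N$ with $N$ even saturates the final bound), so the non-strict inequality in the statement is indeed the sharpest form and the lower bound $k \ge 2$ in the hypothesis is not actually needed for this bound (it is likely there so that the eigenvalue $\lambda_k^C$ is nonzero in the subsequent Cheeger-type estimate). I would mention this briefly but not dwell on it, since the lemma as stated follows from the two-line chain above.
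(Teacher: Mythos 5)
Your proof is correct and follows essentially the same elementary route as the paper: both reduce the claim to a two-step chain of floor-function inequalities using $k \le N$. The only cosmetic difference is that you bound $\lfloor k/2\rfloor \le k/2$ directly, which avoids the parity case split on $N$ that the paper uses after first passing to $\lfloor N/2\rfloor$; your observation about where the hypothesis $k \ge 2$ is actually needed is also accurate.
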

\begin{proof} 
Since $k\leq N$ we have the following inequality
\begin{equation}
\pi\lfloor\frac{k}{2}\rfloor\frac{1}{N} \leq \pi\lfloor\frac{N}{2}\rfloor\frac{1}{N}\left\{\begin{array}{lcl}
= \frac{\pi}{2} & \text{if} & k \in 2\mathbb{Z} \\
= \pi\frac{N-1}{2}\frac{1}{N}\leq \frac{\pi}{2} & \text{if} & k \notin 2\mathbb{Z} \\
\end{array}\right.
\end{equation}
\end{proof}
\begin{lemma}\label{lem:3}
$2\lfloor\frac{k}{2}\rfloor \geq \frac{k}{2}$, for $2\leq k\leq N$.
\end{lemma}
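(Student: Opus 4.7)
The statement to prove is the elementary arithmetic inequality $2\lfloor k/2\rfloor \geq k/2$ for $2\leq k\leq N$, so the plan is simply a case split on the parity of $k$, verifying the inequality by direct computation in each case.

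First, I would treat the even case $k\in 2\mathbb{Z}$. Here $\lfloor k/2\rfloor = k/2$, so $2\lfloor k/2\rfloor = k$, and the desired inequality reduces to $k\geq k/2$, equivalently $k/2\geq 0$, which is immediate from $k\geq 2>0$.

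Next I would handle the odd case $k\notin 2\mathbb{Z}$. Here $\lfloor k/2\rfloor = (k-1)/2$, so $2\lfloor k/2\rfloor = k-1$, and the claim becomes $k-1\geq k/2$, i.e.\ $k\geq 2$. Since $k\geq 2$ is part of the hypothesis (and in fact any odd $k$ in the range satisfies $k\geq 3$), this inequality holds and the proof is complete.

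There is no genuine obstacle here; the only thing worth being careful about is covering both parities cleanly and noting that the lower bound $k\geq 2$ is exactly what makes the odd case work (for $k=1$ the inequality would fail, as $2\lfloor 1/2\rfloor = 0 < 1/2$). The upper bound $k\leq N$ plays no role in this particular inequality and is kept only to match the indexing of the eigenvalues used in the surrounding argument.
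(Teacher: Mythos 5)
Your proof is correct and uses essentially the same argument as the paper: a case split on the parity of $k$, computing $2\lfloor k/2\rfloor$ explicitly as $k$ or $k-1$ and checking the inequality against $k/2$ using $k\geq 2$. Your added remark that the bound fails for $k=1$ is a nice sanity check but does not change the substance.
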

\begin{proof}
 If $k$ is even then $2\lfloor\frac{k}{2}\rfloor =2 \frac{k}{2}\geq \frac{k}{2}$. If $k$ is odd, then $2\lfloor\frac{k}{2}\rfloor = 2\frac{k-1}{2} =k-1\geq \frac{k}{2}$ for $2\leq k\leq N$.
\end{proof}
\begin{lemma}\label{prop:1}
$\sqrt{\lambda^C_{k}}\geq \frac{\sqrt{2}}{4}\frac{1}{\lfloor\frac{N}{k}\rfloor}$, for $2\leq k\leq N$.
\end{lemma}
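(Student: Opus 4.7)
The plan is to reduce the lower bound on $\sqrt{\lambda_k^C}$ to a sharp statement about the sine function and then translate a linear lower bound in $k/N$ into the reciprocal floor form on the right-hand side. Each of the three preceding lemmas (Lemma~\ref{lem:1}, Lemma~\ref{lem:2}, Lemma~\ref{lem:3}) will be invoked once, in a sensible order, and a single elementary inequality about floors will close out the argument.

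First I would use Lemma~\ref{lem:1} to write $\lambda_k^C = 1 - \cos(2\pi \lfloor k/2\rfloor / N)$ and then apply the half-angle identity $1 - \cos\theta = 2\sin^2(\theta/2)$ to obtain
$$\sqrt{\lambda_k^C} \;=\; \sqrt{2}\,\Bigl|\sin\!\bigl(\pi \lfloor k/2\rfloor / N\bigr)\Bigr|.$$
Lemma~\ref{lem:2} shows the argument $\pi \lfloor k/2\rfloor / N$ lies in $[0,\pi/2]$, so the sine is non-negative and the absolute value may be dropped. On this interval Jordan's inequality $\sin x \geq 2x/\pi$ applies, yielding
$$\sqrt{\lambda_k^C} \;\geq\; \sqrt{2}\cdot \frac{2\lfloor k/2\rfloor}{N}.$$

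Next I would bring in Lemma~\ref{lem:3}, which asserts $2\lfloor k/2\rfloor \geq k/2$ for $2\leq k\leq N$, to simplify the right side to $\sqrt{2}\,k/(2N)$. What remains is the purely arithmetic claim
$$\frac{\sqrt{2}\,k}{2N} \;\geq\; \frac{\sqrt{2}}{4\lfloor N/k\rfloor},$$
which rearranges to $\lfloor N/k\rfloor \geq N/(2k)$. This is the one place a small case analysis is needed: since $2\leq k\leq N$ forces $N/k\geq 1$, one checks $1\leq N/k<2$ (where $\lfloor N/k\rfloor = 1 \geq N/(2k)$ since $N/(2k)<1$) and $N/k\geq 2$ (where $\lfloor N/k\rfloor \geq N/k - 1 \geq N/(2k)$ is equivalent to $N/(2k)\geq 1$, i.e.\ $N\geq 2k$) separately.

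I do not expect any real obstacle in this proof; the chain of identities and inequalities is linear and each step is already supplied by a preceding lemma. The only care required is ensuring that $\lfloor k/2\rfloor$ is bounded below by a \emph{linear} function of $k$ (furnished by Lemma~\ref{lem:3}) rather than something like $k/2 - 1$, since we need a clean $k$-dependence to match the $1/\lfloor N/k\rfloor$ on the right, and then handling the two regimes of $N/k$ in the closing floor inequality.
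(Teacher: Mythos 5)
Your proposal is correct and follows essentially the same route as the paper's proof: Lemma~\ref{lem:1} plus the half-angle identity, Jordan's inequality justified by Lemma~\ref{lem:2}, Lemma~\ref{lem:3} to reach $\sqrt{2}\,k/(2N)$, and then the observation that the floor $\lfloor N/k\rfloor$ loses at most a factor of two. The only cosmetic difference is in that last step, where the paper chains $k/N \geq 1/(\lfloor N/k\rfloor+1) \geq 1/(2\lfloor N/k\rfloor)$ while you verify $\lfloor N/k\rfloor \geq N/(2k)$ by a direct two-case check; these are the same elementary fact.
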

\begin{proof}
It holds that
\begin{align}
\sqrt{\lambda^C_k} = &\sqrt{1 - \cos\left( \frac{ 2 \pi \lfloor\frac{k}{2}\rfloor }{N} \right)}  \label{eq:inequality1}\\ 
=& \sqrt{2}\sin\left( \frac{ \pi \lfloor\frac{k}{2}\rfloor }{N} \right) \label{eq:inequality2}\\
\geq & \sqrt{2}\frac{2}{\pi}\left( \frac{ \pi \lfloor\frac{k}{2}\rfloor }{N} \right) \label{eq:inequality3}\\
= & 2\sqrt{2} \lfloor\frac{k}{2}\rfloor \frac{1}{N} \\
\geq & \sqrt{2} \frac{k}{2}\frac{1}{N}\label{eq:inequality4} \\ 
\geq & \frac{\sqrt{2}}{2} \frac{1}{\lfloor\frac{N}{k}\rfloor +1} \\
\geq & \frac{\sqrt{2}}{2} \frac{1}{2\lfloor\frac{N}{k}\rfloor} \\
= & \frac{\sqrt{2}}{4}  \frac{1}{\lfloor\frac{N}{k}\rfloor}
\end{align}
where \eqref{eq:inequality1} follows from Lemma \ref{lem:1}, \eqref{eq:inequality2} follows from the fact that $\cos(2x)= 1-2\sin^2(x)$, \eqref{eq:inequality3} follows from the fact that $\frac{\sin(x)}{x} > \frac{2}{\pi}$ for $x\in\left[-\frac{\pi}{2}, \frac{\pi}{2}\right]$ and from Lemma \ref{lem:2}, \eqref{eq:inequality4} follows from Lemma \ref{lem:3}.
\end{proof}
Combining the results in Lemma \ref{prop:1} together with the ones in Lemma \ref{lem: ringwedge_bounds} we get the following result.
\begin{prop}
For a spider web graph $G_{N,r}$ we have $\varphi_{N, r}(k) \leq \frac{2r}{2r-1}\sqrt{2\lambda^C_{k}}$, for $2\leq k\leq N$.
\end{prop}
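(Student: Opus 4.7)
The plan is to decouple the claimed inequalities into a purely combinatorial part and a purely spectral part. First I will construct explicit symmetric wedge and ring partitions of $G_{n,r}$ whose normalized cut can be evaluated in closed form, yielding upper bounds on $\phi_{n,r}(k)$ and $\psi_{n,r}(k)$ of the shape $\frac{r}{(2r-1)\lfloor n/k\rfloor}$ and $\frac{1}{2\lfloor r/k\rfloor}$ respectively. Then I will establish matching spectral lower bounds of the form $\frac{1}{\lfloor n/k\rfloor} \le 2\sqrt{2}\sqrt{\lambda_k^C}$ and $\frac{1}{2\lfloor r/k\rfloor} \le \sqrt{2\lambda_k^P}$; multiplying the two ingredients gives the claim. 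Note that since the partitions I exhibit are legal wedge (resp.\ ring) partitions, they upper-bound the relevant constrained minima $\phi_{n,r}(k)$ and $\psi_{n,r}(k)$ by definition.

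For the combinatorial step I take maximally balanced partitions. A wedge partition into $k$ sectors, each containing $\lfloor n/k\rfloor$ or $\lfloor n/k\rfloor+1$ consecutive angular nodes on every one of the $r$ rings, has cut exactly $2r$ per sector (one radial boundary edge per ring on each side). The smallest sector has volume $2(2r-1)\lfloor n/k\rfloor$, since the two extremal rings contribute degree $3$ per node and the $r-2$ intermediate rings contribute degree $4$ per node. This gives $\phi_{n,r}(k) \le \frac{r}{(2r-1)\lfloor n/k\rfloor}$. For the ring case I partition into $k$ concentric ring-blocks of $\lfloor r/k\rfloor$ or $\lfloor r/k\rfloor+1$ consecutive rings, indexed from the center outward. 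A brief case distinction on whether $k$ divides $r$ shows that it is always the second block (cut $2n$, volume $4n\lfloor r/k\rfloor$) that realizes the worst $\phi_G(S_i)$, rather than the innermost block, whose smaller cut more than compensates for its smaller volume. This yields $\psi_{n,r}(k) \le \frac{1}{2\lfloor r/k\rfloor}$.

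For the spectral step on the circle I use the closed form $1 - \cos(2\pi j/n)$ for the eigenvalues of the normalized Laplacian of $C_n$; after reordering them with multiplicities, the $k$-th eigenvalue is $\lambda_k^C = 1 - \cos(2\pi \lfloor k/2\rfloor /n)$. Three estimates chained together do the job: the identity $1-\cos(2x) = 2\sin^2 x$, the Jordan inequality $\sin x \ge 2x/\pi$ on $[0,\pi/2]$ (the argument lies in this interval because $k\le n$), and the floor inequality $2\lfloor k/2\rfloor \ge k/2$. Together they give $\sqrt{\lambda_k^C} \ge \frac{\sqrt{2}}{4\lfloor n/k\rfloor}$. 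Substituting into the wedge bound produces $\phi_{n,r}(k) \le \frac{2r}{2r-1}\sqrt{2\lambda_k^C}$. The path-graph estimate on $\sqrt{\lambda_k^P}$ proceeds in an entirely analogous fashion from the corresponding spectral formula for $P_r$, after which $\psi_{n,r}(k) \le \sqrt{2\lambda_k^P}$ follows by the same combination argument.

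The main obstacle is the combinatorial bookkeeping: both spider-web partition families exhibit boundary anomalies (the inner and outer rings have degree $3$ rather than $4$; the innermost ring-block has a strictly smaller cut than the others) that must be handled carefully in order to preserve the sharp constant $\frac{r}{2r-1}$ in the wedge bound and the exact $\frac{1}{2\lfloor r/k\rfloor}$ in the ring bound. A secondary subtlety is that the eigenvalue indexing for $C_n$ forces the $\lfloor k/2\rfloor$ correction coming from the multiplicities of the circle-graph spectrum, and this floor must be tracked through the manipulation $2\lfloor k/2\rfloor \ge k/2$ so that exactly $\lfloor n/k\rfloor$, and not $\lfloor n/(2k)\rfloor$ or similar, appears in the denominator of the final bound.
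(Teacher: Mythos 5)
Your proposal is correct and follows essentially the same route as the paper: an explicit maximally symmetric wedge partition giving $\varphi_{N,r}(k)\le \frac{r}{(2r-1)\lfloor N/k\rfloor}$, combined with the spectral lower bound $\sqrt{\lambda_k^C}\ge \frac{\sqrt{2}}{4\lfloor N/k\rfloor}$ obtained via the reordered circle-graph eigenvalues, the double-angle identity, Jordan's inequality, and the floor estimate $2\lfloor k/2\rfloor\ge k/2$. The combinatorial bookkeeping you flag (degree-$3$ boundary rings, the cut asymmetry of the innermost ring-block) is handled in the paper exactly as you describe.
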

\subsubsection{Ring partitions}
Similarly as for wedge partitions, we will prove a bound on the ring Cheeger constants in terms of the eigenvalues of the path graph with $r$ vertices $P_r$. Some of the computations are analogous to the ones in the previous section, so we will skip the details for these. 
We recall that the eigenvalues of $P_r$ are
\begin{equation}
\lambda^P_k = 1-\cos\left(\frac{\pi (k-1)}{r-1}\right), \quad 1\leq k\leq r,
\end{equation}
see \cite{chung1997spectral}. We have the following inequality for the ring Cheeger constant.
\begin{lemma}\label{lem: ring_ineq}
$  \sqrt{\lambda^P_{k}}\geq \frac{\sqrt{2}}{2}\frac{1}{2\lfloor\frac{r}{k}\rfloor}$, for $2\leq k\leq r$.
\end{lemma}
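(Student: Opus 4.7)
The plan is to mirror closely the chain of inequalities used for the circle graph in Lemma \ref{prop:1}, since the eigenvalues of $P_r$ have the same trigonometric form (only with different arguments), except that the indexing is simpler: the $k$-th eigenvalue is already given by a single formula $\lambda_k^P = 1 - \cos\bigl(\pi(k-1)/(r-1)\bigr)$, so I will not need an analogue of the floor/parity decomposition that Lemma \ref{lem:1} introduced for the circle.

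First I would rewrite $\sqrt{\lambda_k^P}$ using the identity $1-\cos(2x) = 2\sin^2(x)$ to get
\begin{equation*}
\sqrt{\lambda_k^P} \;=\; \sqrt{2}\,\sin\!\left(\frac{\pi(k-1)}{2(r-1)}\right).
\end{equation*}
Since $2\leq k\leq r$, the argument $\pi(k-1)/(2(r-1))$ lies in $(0,\pi/2]$, so I can apply the Jordan-type bound $\sin(x)\geq (2/\pi)\,x$ on $[0,\pi/2]$ (exactly as in \eqref{eq:inequality3}) to obtain
\begin{equation*}
\sqrt{\lambda_k^P} \;\geq\; \sqrt{2}\cdot\frac{2}{\pi}\cdot\frac{\pi(k-1)}{2(r-1)} \;=\; \sqrt{2}\,\frac{k-1}{r-1}.
\end{equation*}

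The remaining step is the elementary inequality $\dfrac{k-1}{r-1}\geq \dfrac{1}{4\lfloor r/k\rfloor}$, equivalently $4(k-1)\lfloor r/k\rfloor \geq r-1$. I would prove this by writing $r=qk+s$ with $q=\lfloor r/k\rfloor\geq 1$ and $0\leq s\leq k-1$, so the target becomes $q(3k-4)\geq s-1$. Since $k\geq 2$ gives $3k-4\geq k-2\geq s-1$, and $q\geq 1$, the inequality holds. Chaining this with the previous display yields $\sqrt{\lambda_k^P}\geq \frac{\sqrt{2}}{2}\cdot\frac{1}{2\lfloor r/k\rfloor}$, which is the claim.

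The only non-routine part is choosing the right loose bound in the final step: one could be tempted to compare $(k-1)/(r-1)$ with $k/r$ or $1/\lceil r/k\rceil$, both of which give slightly different constants. I expect the main obstacle to be nothing more than picking the cleanest elementary estimate that matches the constant $\sqrt{2}/4$ appearing on the right-hand side, which the decomposition $r=qk+s$ handles transparently. Once Lemma \ref{lem: ring_ineq} is in hand, combining it with the ring bound $\psi_{N,r}(k)\leq 1/(2\lfloor r/k\rfloor)$ from Lemma \ref{lem: ringwedge_bounds} immediately yields the second inequality of Proposition \ref{prop: 1}.
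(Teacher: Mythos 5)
Your proposal is correct and follows essentially the same route as the paper: the half-angle identity, the Jordan bound $\sin(x)\geq \tfrac{2}{\pi}x$ on $[0,\tfrac{\pi}{2}]$, and then an elementary floor estimate relating $\tfrac{k-1}{r-1}$ to $\tfrac{1}{4\lfloor r/k\rfloor}$. The paper packages that last step slightly differently (first $k-1\geq k/2$, then $\tfrac{r}{k}-\tfrac{1}{k}\leq \lfloor\tfrac{r}{k}\rfloor+1-\tfrac{1}{k}\leq 2\lfloor\tfrac{r}{k}\rfloor$ rather than your Euclidean decomposition $r=qk+s$), but the content is the same and your version is equally valid.
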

\begin{proof}
It holds that
\begin{align}
\sqrt{\lambda^P_{k}} = &\sqrt{1 - \cos\left( \frac{ \pi (k-1)}{2(r-1)} \right)} \\ 
= & \sqrt{2}\sin\left( \frac{ \pi (k-1) }{2(r-1)} \right) \\
\geq & \sqrt{2}\frac{2}{\pi}\left(  \frac{ \pi (k-1) }{2(r-1)} \right)\\
\geq & \frac{\sqrt{2}}{2}\frac{k}{r-1}\\
= & \frac{\sqrt{2}}{2}\frac{1}{\frac{r}{k}-\frac{1}{k}} \\
\geq & \frac{\sqrt{2}}{2}\frac{1}{\lfloor\frac{r}{k}\rfloor +1 -\frac{1}{k} }\\
\geq & \frac{\sqrt{2}}{2}\frac{1}{2\lfloor\frac{r}{k}\rfloor }\label{eq: ring1}\\
\end{align}
where the inequality \eqref{eq: ring1} follows from the fact that 
\begin{equation}
   \lfloor\frac{r}{k}\rfloor +1 -\frac{1}{k} \leq 2\lfloor\frac{r}{k}\rfloor. 
\end{equation}
\end{proof}
Combining the results in Lemma \ref{prop:1} together with the ones in Lemma \ref{lem: ring_ineq} we get the following result.
\begin{prop}
For a spider web graph $G_{N,r}$ we have $\psi_{N, r}(k) \leq \sqrt{2\lambda^P_{k}}$, for $2\leq k\leq N$.
\end{prop}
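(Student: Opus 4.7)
\textbf{Proof plan for Proposition \ref{prop: 1}.}
My strategy is to upper bound the wedge and ring Cheeger constants by constructing explicit, highly symmetric partitions for which the normalized cut can be computed in closed form, and then relate these closed-form expressions to the eigenvalues $\lambda_k^C$ and $\lambda_k^P$ via standard trigonometric estimates. The spider-web graph $G_{n,r}$ is the Cartesian product of the cycle $C_n$ and the path $P_r$, which is what makes the reduction to circle and path eigenvalues natural.

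\emph{Step 1: closed-form bounds from explicit partitions.}
First I would construct a wedge partition $S_1,\ldots,S_k$ that is maximally balanced along the angular direction, so that each $S_i$ contains either $\lfloor n/k\rfloor$ or $\lfloor n/k\rfloor+1$ vertices in every ring. By symmetry every such wedge has boundary of exactly $2r$ edges (two cuts per ring), and the smallest-volume wedge is the one with $\lfloor n/k\rfloor$ nodes in each ring. A direct degree count (degree $4$ for inner ring nodes and $3$ for outer ring nodes) yields
\begin{equation}
\varphi_{n,r}(k)\;\leq\;\frac{r}{\lfloor n/k\rfloor(2r-1)}.
\end{equation}
For the ring case I would take a radially balanced partition into $k$ concentric ring-shaped pieces, each containing $\lfloor r/k\rfloor$ or $\lfloor r/k\rfloor+1$ consecutive rings. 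A careful comparison between the innermost piece $S_1$ (which has only one ``outer'' boundary and includes the highest-degree vertices) and the inner pieces $S_2,\dots,S_{k-1}$ (which have two boundaries), together with the small-$k$ edge cases, gives
\begin{equation}
\psi_{n,r}(k)\;\leq\;\frac{1}{2\lfloor r/k\rfloor}.
\end{equation}

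\emph{Step 2: eigenvalue identification and trigonometric estimates.}
Next I would rewrite the eigenvalues. The cycle eigenvalues $1-\cos(2\pi j/n)$ for $0\leq j\leq n-1$ appear in pairs, so sorting them gives $\lambda_k^C=1-\cos(2\pi\lfloor k/2\rfloor/n)$; the path eigenvalues are already monotone: $\lambda_k^P=1-\cos(\pi(k-1)/(r-1))$. Using the half-angle identity $1-\cos(2x)=2\sin^2(x)$ and Jordan's inequality $\sin(x)\geq (2/\pi)x$ on $[0,\pi/2]$ (checking the hypotheses via $\pi\lfloor k/2\rfloor/n\leq \pi/2$ and $\pi(k-1)/(2(r-1))\leq \pi/2$), I would derive linear lower bounds of the form $\sqrt{\lambda_k^C}\gtrsim k/n$ and $\sqrt{\lambda_k^P}\gtrsim k/r$. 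A small amount of bookkeeping with $2\lfloor k/2\rfloor\geq k/2$ and $\lfloor r/k\rfloor+1-1/k\leq 2\lfloor r/k\rfloor$ then converts these into the desired estimates
\begin{equation}
\sqrt{\lambda_k^C}\;\geq\;\frac{\sqrt{2}}{4}\cdot\frac{1}{\lfloor n/k\rfloor},\qquad \sqrt{\lambda_k^P}\;\geq\;\frac{\sqrt{2}}{2}\cdot\frac{1}{2\lfloor r/k\rfloor}.
\end{equation}

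\emph{Step 3: combination.}
Substituting the inequalities of Step 2 into the bounds from Step 1 and collecting constants yields $\varphi_{n,r}(k)\leq\frac{2r}{2r-1}\sqrt{2\lambda_k^C}$ and $\psi_{n,r}(k)\leq\sqrt{2\lambda_k^P}$, which are the claimed inequalities (recalling the definition $NC(G,P)=\max_i Cut(p_i)/Volume(p_i)=\max_i\phi_G(p_i)$).

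\emph{Expected obstacles.}
The geometric constructions and the final substitutions are routine; the delicate parts are (i) correctly identifying the sorted $k$-th eigenvalue of $C_n$ (the pairing of $1-\cos(2\pi j/n)$ forces the $\lfloor k/2\rfloor$ and justifies the restriction $k\leq n$ in the wedge bound, and analogously $k\leq r$ in the ring bound), and (ii) tracking the floor functions and parity of $k$ in the ring case, where $S_1$ and $S_2$ have genuinely different cut-to-volume ratios and a small-$k$ (especially $k=2$) case analysis is needed to make sure the bound $1/(2\lfloor r/k\rfloor)$ is achieved by the worst piece. These are the places where I would be most careful.
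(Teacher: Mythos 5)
Your proposal follows essentially the same route as the paper's own proof: explicit maximally symmetric wedge/ring partitions giving the closed-form bounds $\varphi_{n,r}(k)\leq \frac{r}{\lfloor n/k\rfloor(2r-1)}$ and $\psi_{n,r}(k)\leq \frac{1}{2\lfloor r/k\rfloor}$, followed by the half-angle identity and Jordan's inequality to lower-bound $\sqrt{\lambda_k^C}$ and $\sqrt{\lambda_k^P}$ by the reciprocals of the same floor expressions, and a final substitution. The details you flag as delicate (the pairing of cycle eigenvalues forcing the $\lfloor k/2\rfloor$, the $S_1$ versus $S_2$ comparison and the $k=2$ edge case, and the inequality $\lfloor r/k\rfloor+1-\frac{1}{k}\leq 2\lfloor r/k\rfloor$) are exactly the ones the paper handles, so the plan is sound and matches.
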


\appsection{The Ring Wedge Transformer Module}
\label{app:transformer}
\begin{figure*}[t]
  \centering
  \includegraphics[width=1\linewidth]{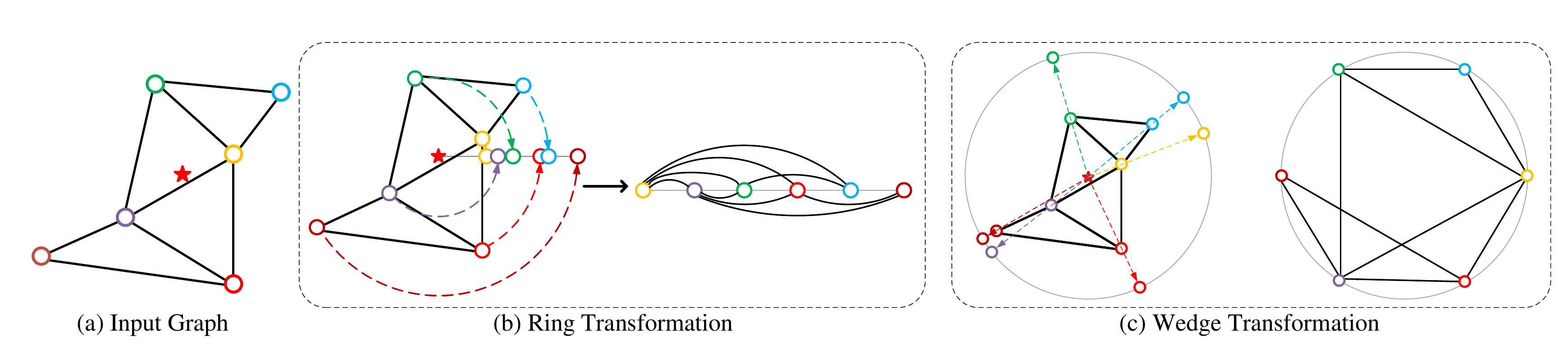}
  \caption{Example of Wedge Transform and Ring Transform. In Wedge Transform, nodes are projected to a circle, then the difference of angles of adjacent nodes are adjusted to the same. In Ring Transform, nodes are projected to a line. The edge connections and their weights are not changed in both transformation.}
  \Description{Diagram showing wedge transformation to a circle and ring transformation to a line while preserving edge connections and weights.}
  \label{fig:transform}
\end{figure*}
\ourmodel utilizes a Transformer backbone to leverage information from transformed graphs, enabling it to handle varying node counts and enhancing its scalability for diverse applications without the need for fine-tuning after training. The Transformer architecture is illustrated in Figure \ref{fig:model} (b).
The transformer processes inputs from the Pre-Calculation module, specifically Cut Weight and Volume Matrix, along with the Current Partition from the input graph. These are fed into \(n\) Transformer blocks, yielding node embeddings from the final hidden state.
To effectively manage Current Partition, we represent each node's selection status Partition Selection with a 0-1 array, then it is combined with Cut Weight and transformed through a linear layer to generate hidden states, which are subsequently augmented with positional embeddings. 
In the transformer module, we introduce Partition Aware Multi-Head Attention (PAMHA) to replace the original Multi-Head Attention (MHA) layer. PAMHA incorporates the Volume Matrix and Current Partition into its attention mask. 
An element-wise transformation on \(V\) produces an attention mask of shape \(N \times N\) for PAMHA, allowing the model to learn the significance of different nodes.
For Current Partition, we observe that partitions splitting between nodes \(i\) and \(i+1\) do not affect the normalized cut calculations on the right of \(i+1\). For instance, in the circular graph with six nodes depicted in Figure \ref{fig:model}, 
two wedge partition angles are already established. When partitions are made 
among the red, green, blue, and yellow nodes, these modifications have no impact 
on the contributions from the brown and purple nodes. 
Consequently, we create an attention mask focusing solely on the effective range of nodes.
Finally, the transformer module outputs node embeddings, which are then input to the PPO module. 

\appsection{Training and Testing of \ourmodel}
\label{app:traintest}
In this section, we describe the training and testing strategies in details.
\subsection{Training Strategy}
In previous model designs, \ourmodel has effectively extracted information from graphs. 
However in RL, the initial strategies are inherently randomized, 
presenting a significant challenge in learning an optimal strategy. 
Specifically, the ring partition and wedge partition can interfere with 
one another. For instance, if the ring partition consistently selects the 
smallest radius as its action, the wedge partition will struggle to learn 
any valid policy. This occurs because in such situation, the Normalized Cut 
for the ring partition is considerably large, and the final Normalized Cut 
is significantly influenced by the ring partition, regardless of the actions 
taken by the wedge partition. Consequently, training a ring partition with 
a low-quality wedge partition strategy will also encounter similar difficulties.
To mitigate the aforementioned problem, we have divided the policy training 
into two distinct stages, as illustrated in 
Figure \ref{fig:model} (c) \ding{172} and \ding{173}. 
In the first Wedge Training stage, we implement a randomized ring selection 
method to replace ring selection strategies in \ourmodel. In this phase, 
\ourmodel is only responsible for determining and optimizing the wedge partitioning. 
To enhance the model's concentration on developing an effective wedge 
partition strategy, we intentionally ignored the Normalized Cut of ring 
partitions when evaluating the reward received by \ourmodel. This approach ensures 
that the model is concentrated on mastering a robust wedge partitioning strategy.
In the second Ring Training stage, we allow \ourmodel to determine both the ring 
and wedge partitions. However, we have observed that permitting the model 
to adjust its parameters related to wedge partitioning can lead to a 
decline in its ability to effectively execute wedge partitioning before 
it has fully developed an optimal strategy for ring partitioning. 
To mitigate this issue, we freeze the parameters of the wedge partitioning 
modules in \ourmodel, because \ourmodel has already established a robust wedge partitioning 
strategy across various radii.
The only exception is the Critic Projection Header. In the previous stage, 
we modified this header to solely utilize the Normalized Cut of wedge partitions 
as the reward, which is inconsistent with current reward definition. 
Hence, during the Ring Training stage, both Critic Projection Headers are re-initialized and trained.
In PPO, since the strategies are only determined by actor model, allowing 
the critic to be trainable does not interfere with the learned policy.
\subsection{Testing Strategy}
After \ourmodel is fully trained, it can generate partitions directly during the 
Partition Generation stage. Initially, the model will perform a ring 
partition, followed sequentially by wedge partitions, as illustrated in 
Figure \ref{fig:model} (c) \ding{174}.
While we have proved that ring and wedge partitions possess similar upper bounds 
under certain constraints, in real-world graphs, these partitions may not yield 
optimal results when outliers exist. As illustrated in 
Figure \ref{fig:model} (c) \ding{175}, the original grouping of two nodes 
are reversed during the execution of a pure ring and wedge partition. 
To mitigate this problem, we propose a Post Refinement Stage, where
nodes within the same partition that are not directly connected will be 
separated into distinct partitions. We then adopt a greedy approach to select 
the partition with the highest Normalized Cut and  subsequently merge it 
with adjacent partitions. This post-refinement technique effectively reduces 
the number of outlier nodes and enhances the quality of the resulting partitions produced by our method.
Finally, since PPO is a policy-gradient based method that generates an action 
probability distribution, relying on a single segmentation may not directly 
lead to the optimal solution. Therefore, we can perform multiple random 
samplings to obtain different partitions and select the best of them.

\appsection{Statistics and Hyper Parameters}
\label{app:stat}
We show the statistics of datasets and hyper parameters of model in Table \ref{tab:stas}.
\begin{table*}[ht]
\small
\centering
\caption{
Statistics of datasets and hyper parameters of model.
}
\label{tab:spider_graph_specs}
\begin{tabular}{@{}clll@{}}
\toprule
\textbf{Type} & \textbf{Parameter} & \textbf{Values} & \textbf{Description} \\ \midrule
\multirow{5}{*}{\rotatebox{90}{\centering Synthetic}} & Nodes & \{50, 100\} & Nodes on each circle \\
&Circles & same as Nodes & Number of concentric rings \\
&Low Weight & \{2, 4, 6\} & Intra-partition edge weights \\
&High Weight & \{10, 15, 20\} & Inter-partition edge weights \\
&Random Weight & Uniform(1, 10) & Edge weights for random \\
\hline
\multirow{2}{*}{\rotatebox{90}{\centering Real}} & Nodes & \{50, 100\} & Nodes on each graph \\
& Edge Weight & [1, 372732] & Edge weights\\
\hline
\multirow{9}{*}{\rotatebox{90}{\centering \makecell{Hyper Parameters}}} &Partitions & \{4,6\} & Number of partitions \\
& Hidden Size & 64 & Hidden size of Transformer \\
& Layer Number & 3 & Transformer layer number \\
& Learning Rate & 1e-3 & Learning rate \\
& Batch Size & 256 & Batch size \\
& Discount Factor & 0.9 & Discount factor in RL \\
& Training Step & 400,000 & Steps for training \\
& Test Number & 100 & Test graph number \\
& Sample Number & 10 & Sampled partition number \\
\bottomrule
\end{tabular}
\label{tab:stas}
\end{table*}
\appsection{Ablation Studies}
\label{app:ablation}
\begin{table*}[t]
\centering
\caption{Transfer performance measured by Normalized Cut. Methods that do not support transfer or unable to perform results are excluded. Models are trained on 100 nodes and tested on 50 or 200 nodes.  }

\label{tab:zeroshot-ab}
\begin{tabular}{@{}l*{12}{c}@{}}
\toprule
& \multicolumn{4}{c}{Predefined-weight} & \multicolumn{4}{c}{Random-weight} & \multicolumn{4}{c}{City Traffic} \\
\cmidrule(lr){2-5} \cmidrule(lr){6-9} \cmidrule(l){10-13}
Partition & \multicolumn{2}{c}{4 Part.} & \multicolumn{2}{c}{6 Part.} & \multicolumn{2}{c}{4 Part.} & \multicolumn{2}{c}{6 Part.} & \multicolumn{2}{c}{4 Part.} & \multicolumn{2}{c}{6 Part.} \\
\cmidrule(lr){2-3} \cmidrule(lr){4-5} \cmidrule(lr){6-7} \cmidrule(lr){8-9} \cmidrule(lr){10-11} \cmidrule(l){12-13}
Nodes & 50 & 200 & 50 & 200 & 50 & 200 & 50 & 200 & 50 & 200 & 50 & 200 \\
\midrule
METIS &      {.069}  & .019  & {.097}  & {.027}  & {.065}  & {.016}  & {.094}  & {.024}  & .245  & {.048}  & {.383}  & {.086} \\
Bruteforce & .070  & {.018}  & .106  & .028  & .070  & .018  & .107  & .028  & .361  & .175  & .615  & .311 \\
Random &     .076  & .021  & .144  & .037  & .080  & .021  & .142  & .037  & {.209}  & .512  & .512  & .212 \\
\hline
\ourmodel$_{sr}$ & {.219} & {.201} & {.068} & {.018} & {.085} & {.023} & {.092} & {.024} & {.664} & {.305} & {.831} & {.224} \\
\ourmodel$_{e2e}$ & .103 & .027 & .107 & .027 & .107 & .028 & .123 & .033 & .645 & .327 & .863 & .442 \\
\ourmodel$_o$ & {.052} & {.013} & {.066} & {.016} & {.061} & {.016} & {.087} & {.022} & {.182} & {.031} & {.472} & {.014} \\
\ourmodel$_{nfw}$ & {.053} & {.013} & {.066} & {.017} & {.061} & {.016} & {.087} & {.104} & {.150} & {.023} & {.327} & {.090} \\
\ourmodel & {.052} & {.013} & {.066} & {.017} & {.061} & {.016} & {.087} & {.022} & {.158} & {.023} & {.323} & {.085}
\color{black} \\
\bottomrule
\end{tabular}
\end{table*}
\begin{table}[t]
\centering
\caption{Performance comparison on City Traffic Graphs (Normalized cut). Lower values indicate better performance. }

\label{tab:overall-ab-new}
\begin{tabular}{@{}l*{4}{c}@{}}
\toprule
\multirow{3}{*}{Method} & \multicolumn{4}{c}{City Traffic} \\
\cmidrule(lr){2-5} 
& \multicolumn{2}{c}{4 Part.} & \multicolumn{2}{c}{6 Part.} \\
\cmidrule(lr){2-3} \cmidrule(lr){4-5} 
& 50 & 100 & 50 & 100 \\
\midrule
GPS & 0.696 & 0.521 & 0.821 & 0.711 \\
Transformer & 0.186 & 0.122 & 0.360 & 0.225 \\
\ourmodel w/o PAMHA & 0.169 & 0.075 & 0.374 & 0.193 \\
\ourmodel & {0.174} & {0.060} & {0.317} & {0.182}
\color{black} \\
\bottomrule
\end{tabular}
\end{table}
We propose ablation studies of \ourmodel and its variants.
They are as follows:
\textbf{\ourmodel} is the standard Wedge-Ring Partition with two-stage training.
\textbf{\ourmodel$_{e2e}$} directly learns Wedge-Ring Partition without two-stage training.
\textbf{\ourmodel$_{sr}$} uses the same reward function during two training stages.
\textbf{\ourmodel$_{o}$} does not perform post refinement after ring-wedge partition is generated.
\textbf{\ourmodel$_{nfw}$} does not freeze the wedge action network during the second training stage.
We give additional ablation studies in the following to show the effectiveness of proposed methods.
The performance of ablation models are shown in Table \ref{tab:zeroshot-ab}.
\subsection{Multi-stage Training and Testing}
In Section \ref{subsec:pipeline}, we propose multi-stage traininig and testing strategies. In training, we 
propose to train the wedge partition model firstly, and randomly select ring partitions. The radius are uniformly selected from 0 to 80\% of maximum radius.
After wedge partition model is trained, we re-initialize the critic projection header of wedge model, and fix the other parts of wedge model to train the ring model part.
We show the performance without two stage training as \ourmodel$_{e2e}$.
From Table \ref{tab:overall-ab} and Table \ref{tab:zeroshot-ab}, we can find that without two stage training, the model is not able to converge, because 
a bad policy of either ring or wedge will affect the learning process of each other, and make the model hard to converge.
\begin{figure*}[ht]
  \centering
  \captionsetup[subfigure]{labelformat=empty} 
  \begin{subfigure}{.45\textwidth}
    \centering
    \includegraphics[width=\textwidth]{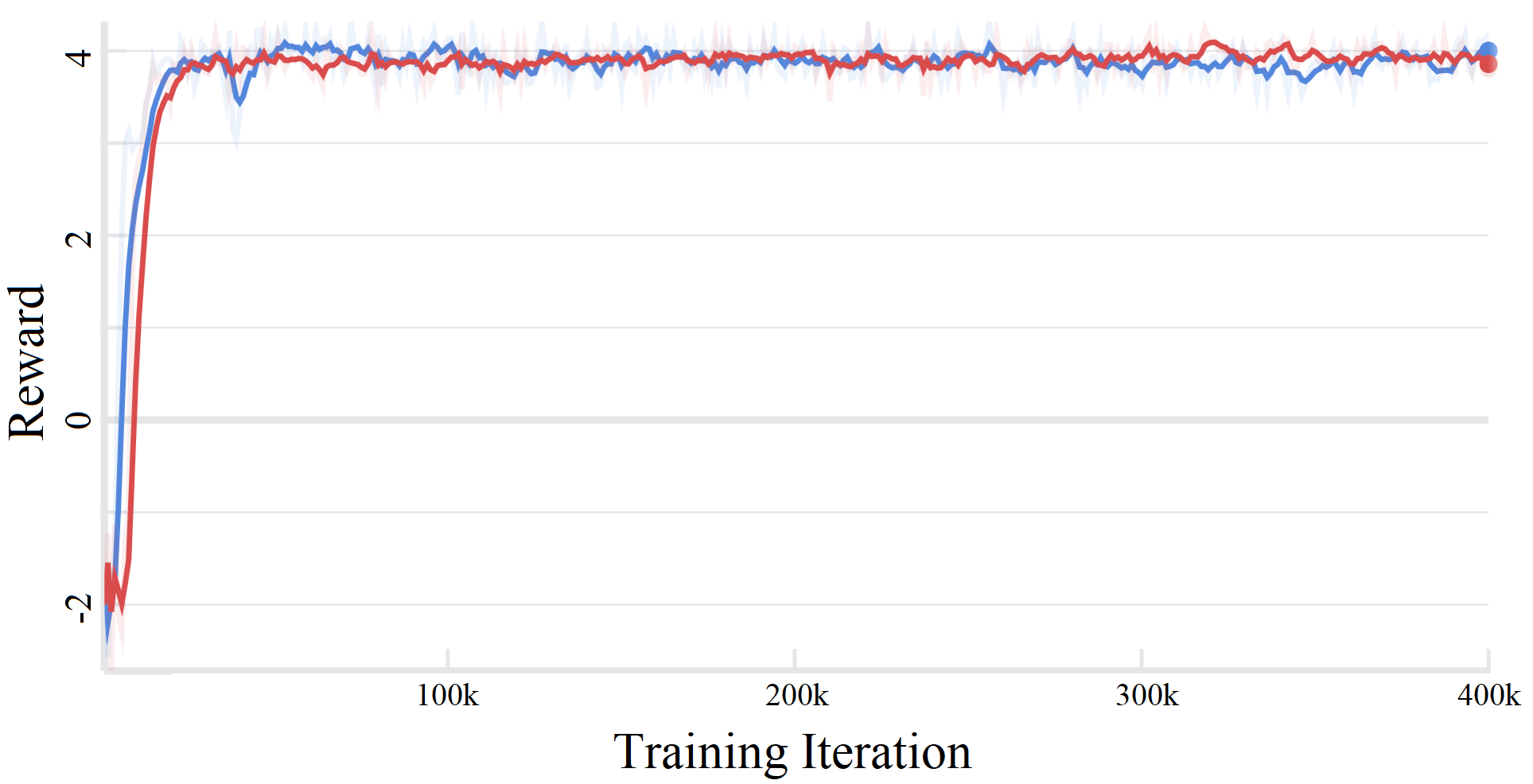}
    \caption{Training reward curve of \ourmodel}
  \end{subfigure}
  \hfill 
  \begin{subfigure}{.45\textwidth}
    \centering
    \includegraphics[width=\textwidth]{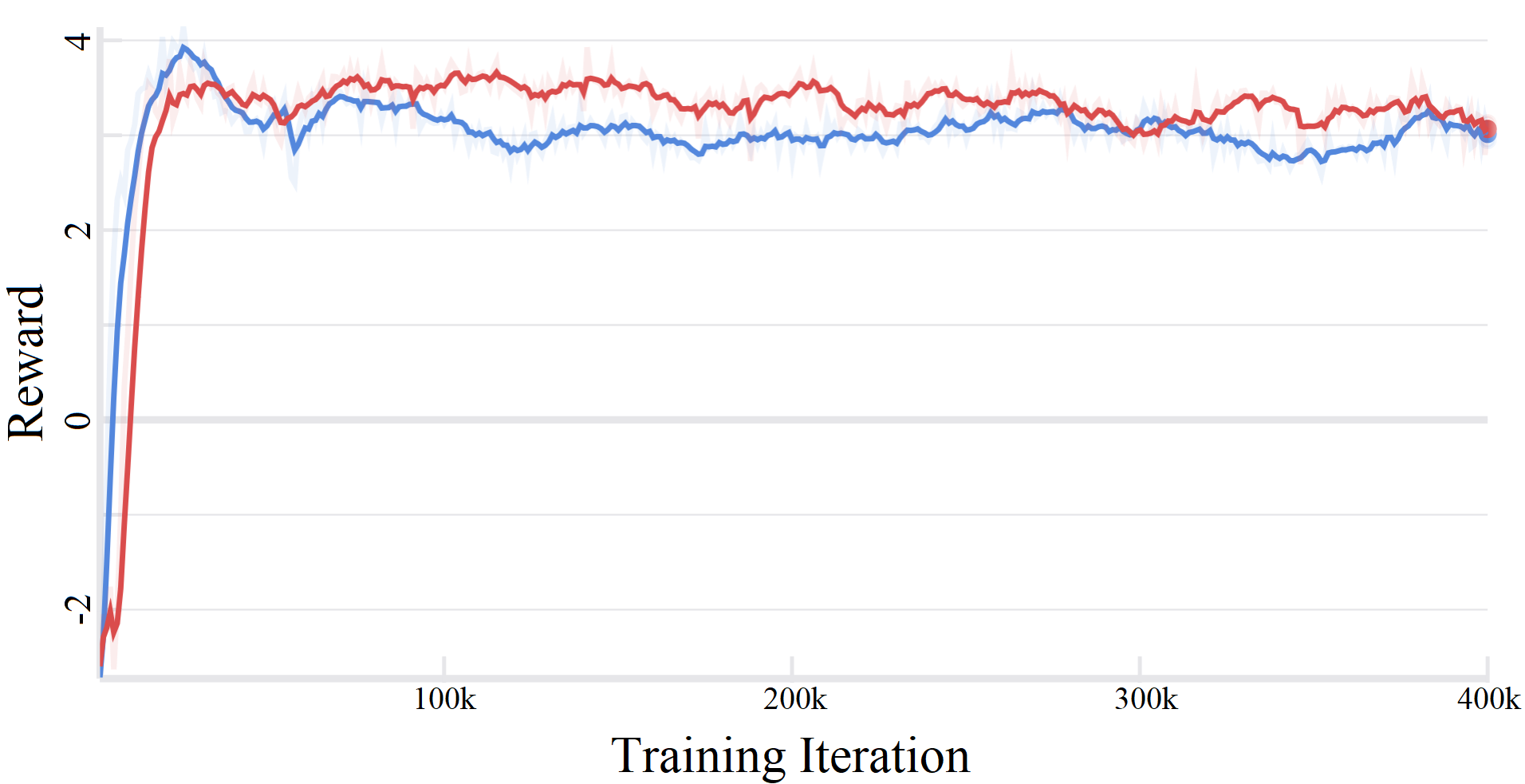}
    \caption{Training reward curve of \ourmodel$_{nfw}$}
  \end{subfigure}
   
  \begin{subfigure}{.45\textwidth}
    \centering
    \includegraphics[width=\textwidth]{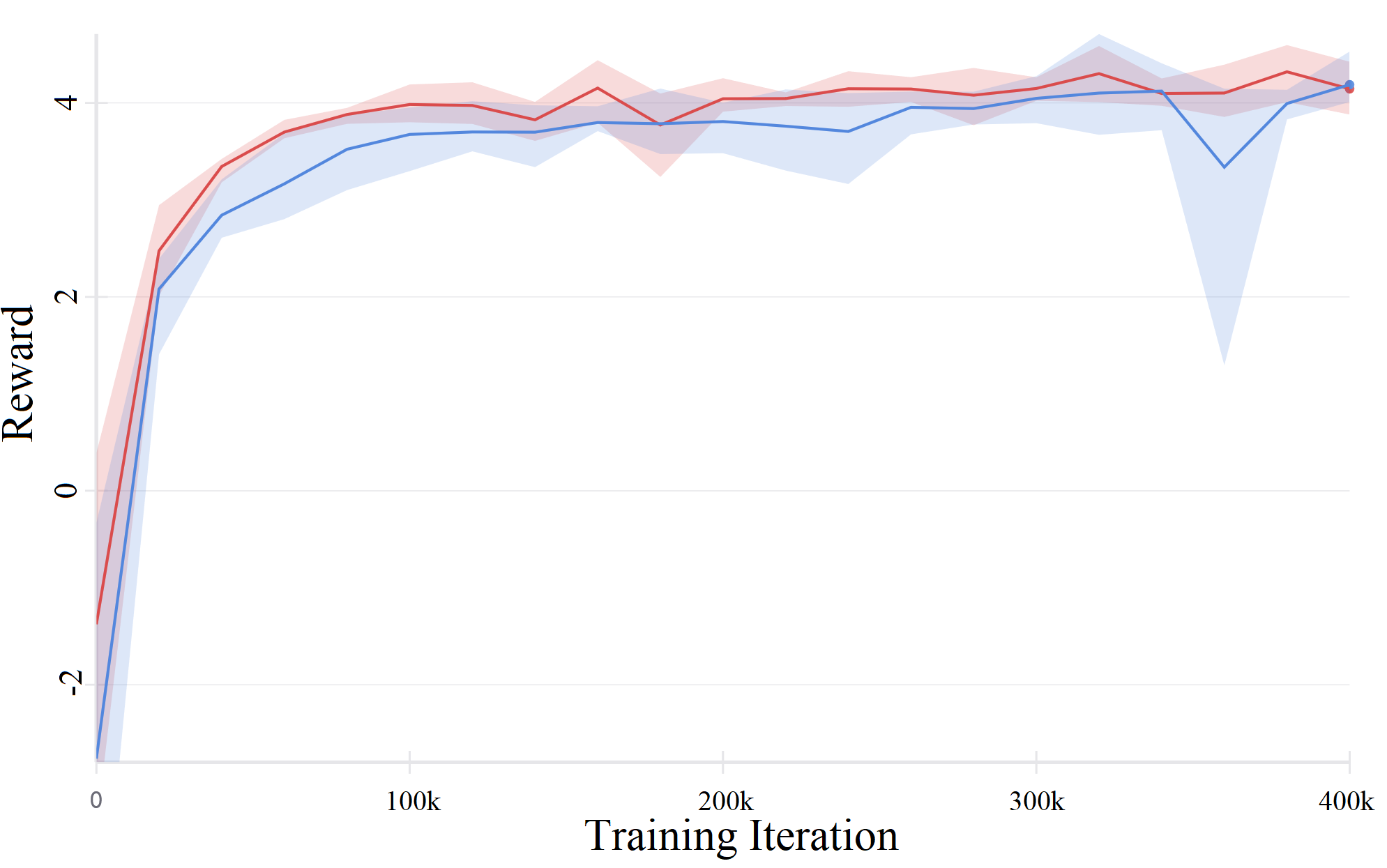}
    \caption{Testing reward curve of \ourmodel}
  \end{subfigure}
  \hfill
  \begin{subfigure}{.45\textwidth}
    \centering
    \includegraphics[width=\textwidth]{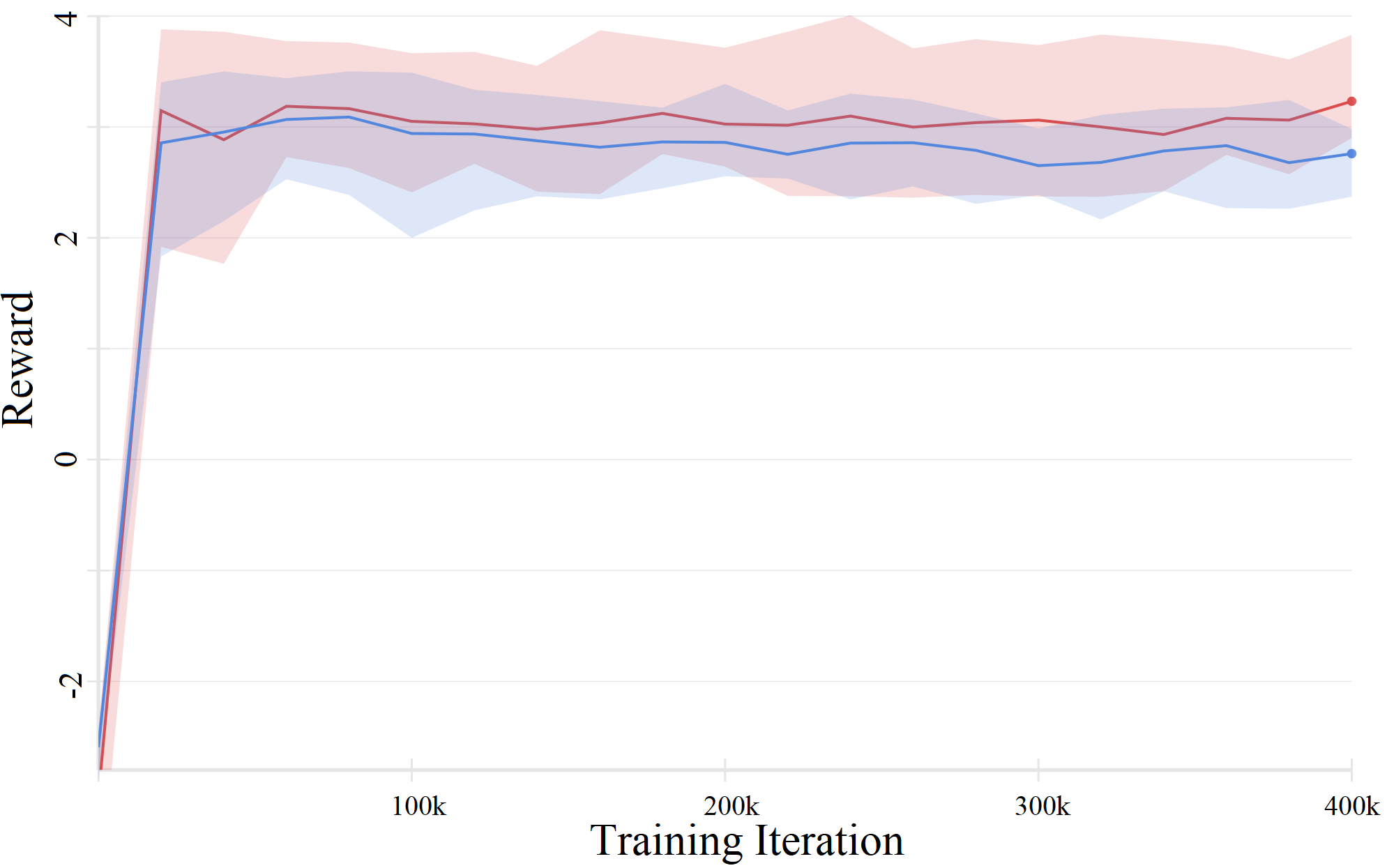}
    \caption{Testing reward curve of \ourmodel$_{nfw}$}
  \end{subfigure}
	  \caption{Reward curves during training and testing of Predefined-weight Graph. Red is with 50 node number and blue is with 100 node number. We individually perform 4 tests for each checkpoint, using the curve to represent the average test results, while the shaded area indicates the maximum and minimum values observed during the tests.}
  \Description{Training and testing reward curves for RidgeCut and RidgeCut without frozen wedge training on predefined-weight graphs.}
	  \label{fig:reward-curve}
\end{figure*}
\subsection{Different Reward Function in Two-stage}
As mentioned in Section \ref{subsec:pipeline}, in training wedge partition, we change the reward function from global Normalized Cut to the Normalized Cut that only considering wedge partitions.
This avoids the impact of poor ring partition selection, as ring partition is performed by a random policy, and may give poor partitions.
For example, if the random policy selects a very small radius, the normalized cut of circle partitions will be very big, which makes the reward received from different wedge partition identical.
We show the performance when the reward function keeps same, i.e. always considering the normalized cut of ring partition in two stage training, as \ourmodel$_{sr}$. In Table \ref{tab:overall-ab} and \ref{tab:zeroshot-ab}, we can find that their performance is worse than \ourmodel, because their wedge partitioning policies are not strong enough.
As reward function will change in two-stage training, we will re-initialize the critic net of wedge model in the second stage, as mentioned before.
\subsection{Fix Wedge Partition Policy}
In the second training stage, we fix the wedge model to avoid changing the policy. \ourmodel$_{nfw}$ shows the performance when wedge partition policy is not fixed. We can find that the performance decreases if wedge partition policy is not fixed, and leads to bad policy in several test cases. 
This is because if we allow the action net change, it may forget learned policy before a valid policy has learned by ring partition, and leads to worse performance and instability during the training.
The reward curve during training and testing, which is shown in Figure \ref{fig:reward-curve}, also supports the conclusion. 
It has been observed that not fixing the action net results in lower and more unstable rewards for the model during training. Moreover, the performance during testing tends to become more variable and does not show further improvements as training progresses.
\subsection{Post Refinement}
We perform post refinement after performing the ring and wedge partition,
which splits existing partition result by the connectivity of nodes, then reconstruct new partitions by combining the partition which has biggest Normalized Cut with its adjacent partitions.
As ring and wedge partitions on synthetic graphs are always connected, this post refinement will not change the performance of \ourmodel on synthetic dataset.
However, in real dataset, sometimes the graph shape is not compatible to ring and wedge partition, and the results may not good enough. With post refinement, we can further decrease the Normalized Cut on such situation. In Table \ref{tab:overall-ab}, we show the performance improvements with post refinement on real dataset, the normalized cut is decreased 22.4\% on average.
\subsection{Graph Center Selection}
\begin{figure*}[tb]
    \centering
	    \includegraphics[width=0.4\textwidth]{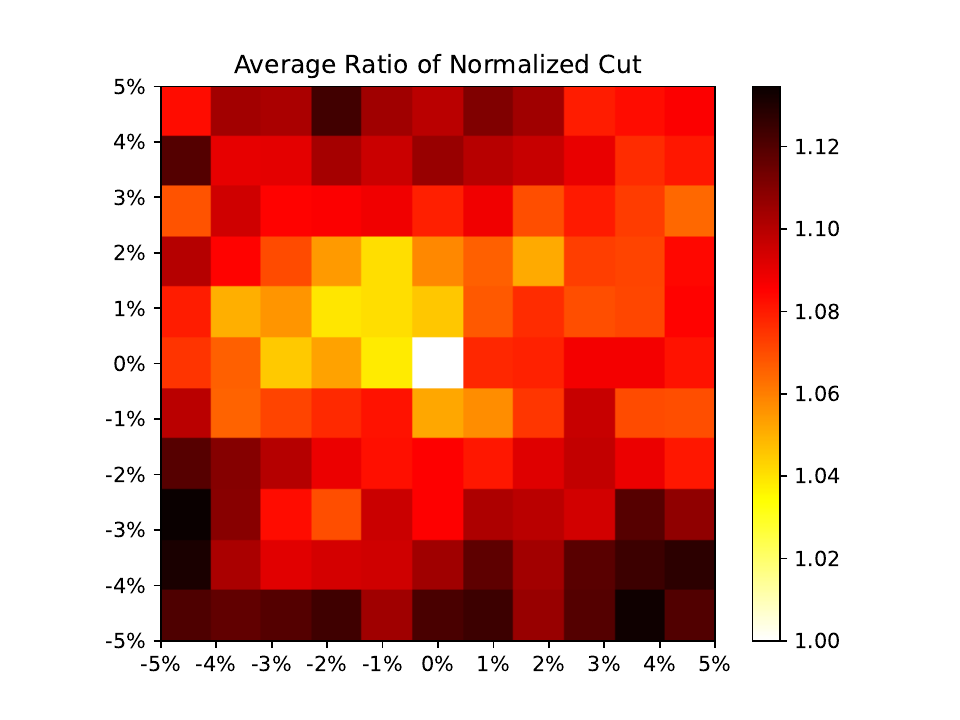}
	    \hspace{0.5cm}\includegraphics[width=0.4\textwidth]{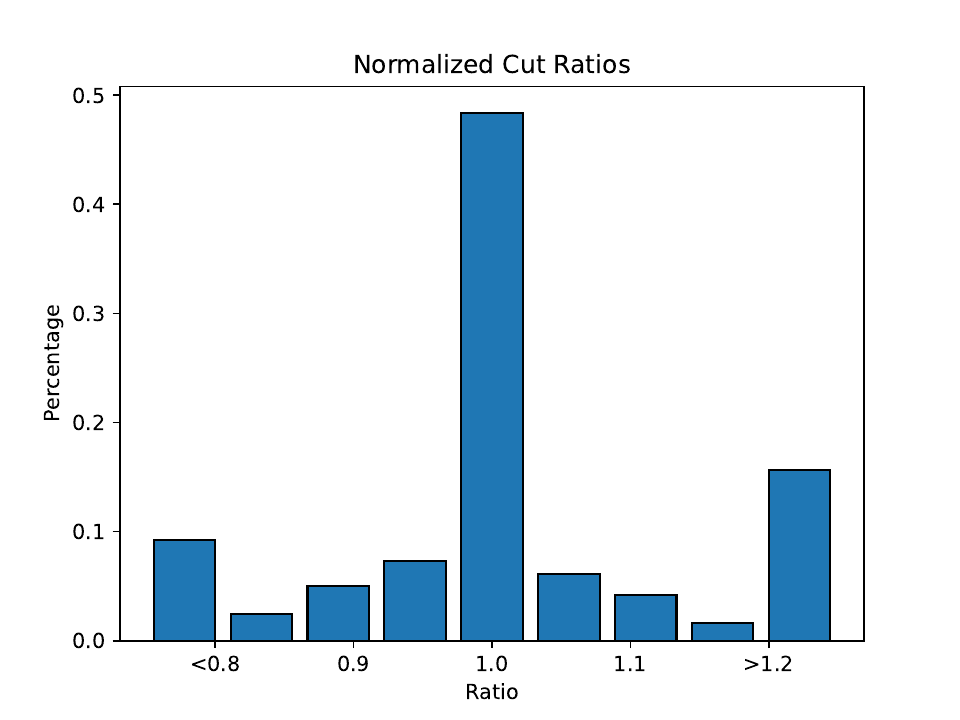}
	    \caption{Heatmap and histogram of Normalized Cut when applying offsets to the center. Values are normalized by Normalized Cut of unoffset center. Lower value is better.}
	    \Description{Heatmap and histogram summarizing normalized cut ratios under different center offsets.}
	    \label{fig:center_ratio}
\end{figure*}
We conducted experiments on the test set of City Traffic graphs with 50 nodes, which contains 100 graphs. Based on the maximum aspect ratio of the graphs, we offset the centroid by a distance of up to 5\% and recalculated the results of Normalized Cut. For better comparison, we normalized the results using the Normalized Cut from the unoffset scenario. A normalized value closer to zero indicates better performance, with a value of 1 signifying that the results are the same as in the unoffset case.
Figure \ref{fig:center_ratio} (left) illustrates the results for various offsets from the centroid. We observe that any offset from the centroid results in a worse performance, and with greater offsets correlating to a more significant decline.
In Figure \ref{fig:center_ratio} (right), we present the histogram of results across all the aforementioned offsets and graphs. We find that in nearly half of the cases where offsets were applied, the resulting errors remained within 5\%. Furthermore, applying offsets tends to lead to worse outcomes more frequently. Thus, in this paper, we opted to use the centroid as the center of the graph. Figure \ref{fig:center_ratio} (right) also shows that in approximately 15\% of cases, offsetting the centroid yielded improvements of over 10\%. In the future, we can propose a more effective strategy for centroid selection to enhance the algorithm's performance.
\subsection{Effectiveness of Graph Transformation, \ourmodel and PAMHA}
We demonstrate the effectiveness of our proposed Graph Transformation methods, \ourmodel and PAMHA, as presented in Table \ref{tab:overall-ab-new}. The methods evaluated are as follows:
\begin{itemize}
\item GPS \cite{DBLP:conf/nips/RampasekGDLWB22}: It integrates GNNs and Transformers to effectively capture both local and global information. It serves as the benchmark for evaluating the performance of GNNs and Transformers in addressing the Normalized Cut problem using Ring and Wedge Partition techniques.
\item Transformer: We implement Ring and Wedge Transformations on the graph, utilizing node coordinates as positional embeddings and edge weights as attention masks, followed by the application of a standard Transformer model.
\item \ourmodel w/o PAMHA: This configuration maintains the same architecture as \ourmodel but substitutes PAMHA with conventional Multi-Head Attention.
\end{itemize}
From the results, we observe that GPS struggles to produce meaningful results in City Traffic. In contrast, the Transformer model demonstrates significantly improved outcomes when compared to GPS. This finding reinforces the notion that constraining the action space of RL agents based on domain knowledge is essential for enhancing performance. Next, we integrated the \ourmodel structure, which facilitates data pre-calculation, and observed an increase in performance. This indicates that the pre-calculation module is beneficial in addressing the problem. Finally, by incorporating PAMHA into the Transformer framework, we achieved the full \ourmodel architecture, which exhibited superior performance relative to other variants.
\subsection{Performance of Other Methods}
\begin{table}[t]
\centering
\caption{Performance comparison on City Traffic Graphs (Normalized cut). Lower values indicate better performance. }

\label{tab:overall-comp}
\begin{tabular}{@{}l*{4}{c}@{}}
\toprule
\multirow{3}{*}{Method} & \multicolumn{4}{c}{City Traffic} \\
\cmidrule(lr){2-5} 
& \multicolumn{2}{c}{4 Part.} & \multicolumn{2}{c}{6 Part.} \\
\cmidrule(lr){2-3} \cmidrule(lr){4-5} 
& 50 & 100 & 50 & 100 \\
\midrule
DMon & 0.998 & 1.000 & 1.000 & 1.000 \\
MinCutPool & 0.549 & 0.365 & 0.864 & 0.634 \\
Ortho & 0.924 & 0.892 & 0.997 & 0.982 \\
\ourmodel & {0.174} & {0.060} & {0.317} & {0.182}
\color{black} \\
\bottomrule
\end{tabular}
\end{table}
\begin{table}[t]
\small
\centering
\caption{Comparison of Hidden State and Learning Rate for Different Methods.}

\label{tab:hyp-comp}
\begin{tabular}{@{}lcc@{}}
\toprule
         & {Hidden State} & {Learning Rate} \\ \midrule
Search Range          & 16, 32, 64, 128, 256  & 1e-4, 3e-4, 1e-3, 3e-3, 1e-2 \\ 
Dmon                  & 128                   & 1e-4                    \\ 
MinCutPool            & 32                   & 1e-3                    \\ 
Ortho                 & 256                   & 1e-2                    \\ 
\bottomrule
\end{tabular}
\end{table}
We incorporate additional comparative methods Analyzed on City Traffic Graph, which include:
\begin{itemize}
\item DMon \cite{DBLP:journals/jmlr/TsitsulinPPM23}: A neural attributed graph clustering method designed to effectively handle complex graph structures.
\item MinCutPool \cite{DBLP:conf/icml/BianchiGA20}: This method focuses on optimizing the normalized cut criterion while incorporating an orthogonality regularizer to mitigate unbalanced clustering outcomes.
\item Ortho \cite{DBLP:journals/jmlr/TsitsulinPPM23}: This refers to the orthogonality regularizer that is utilized in both DMon and MinCutPool, ensuring greater balance in the clustering process.
\end{itemize}
All models were trained using the same settings as \ourmodel. We conducted a grid search on the hyperparameters of the three aforementioned methods and selected the optimal combination of hyperparameters to train on other datasets. The search range and the selected hyperparameters are detailed in Table \ref{tab:hyp-comp}.
The results are summarized in Table \ref{tab:overall-comp}. From the findings, it is evident that Dmon fails to effectively learn the partition strategy, resulting in most outcomes being invalid (with Normalized Cut values of 1). Ortho performs slightly better but still tends to yield unbalanced results. In contrast, MinCutPool demonstrates a significant improvement over the previous methods; however, it still exhibits a considerable range compared to our proposed \ourmodel.

\appsection{Detail of the Model Pipeline}
\label{app:pipeline}
We detail the model transformation formulas below. Since the processes for ring and wedge partitioning are similar, we focus on the wedge partitioning pipeline and note the differences later.
Let \(G\) be the input graph and \(P\) the current partition. We apply Wedge Transformation to \(G\) to obtain a linear graph \(G'\). Define \(n_i\) as the \(i\)-th node on this line, with \(n\) candidate actions. Action \(a_i\) corresponds to selecting the radius of the ring partition between \(n_i\) and \(n_{i+1}\). The input embedding is constructed as follows:
\[
\boldsymbol{X}_i = \text{Linear}_{CW}(\text{Cut}_i \oplus \text{PS}_i) + \text{Pos}_{i/|N|} \to \mathbb{R}^d,
\]
where \(\text{Linear}_{CW}\) is a linear transformation, \(\text{Cut}_i\) is the Cut Weight between \(n_i\) and \(n_{i+1}\), \(\text{PS}_i\) is the Partition Selection for \(n_i\), and \(\text{Pos}_{i/|N|}\) is the positional embedding scaled based on the total number of nodes.
In \ourmodel, we derive the attention masks \(M^P\) and \(M^V\) as follows:
\begin{equation}
\begin{aligned}
M^P_{i,j} &= 
\begin{cases}
0 & \text{if } i \text{ and } j \text{ are in the same partition} \\
-\infty & \text{otherwise}
\end{cases}, \\
M^V_{i,j} &= \text{Linear}_V(V_{i,j}).
\end{aligned}
\end{equation}
When nodes \(i\) and \(j\) are in different partitions, the attention weight is set to \(-\infty\) to prevent their influence on each other. Denote \(H^0_i = X_i\); the \(t\)-th hidden states \(H^t_i\) are computed as follows:
\begin{align}
\boldsymbol{Q}^t, \boldsymbol{K}^t, \boldsymbol{V}^t &= \text{Linear}_{\text{Att}}(\boldsymbol{H}^t), \\
\boldsymbol{O}^t &= \text{Softmax}(\boldsymbol{QK}^t + \boldsymbol{M^V} + \boldsymbol{M^P}).
\end{align}
The output for each node at the \(t\)-th layer is:
\begin{align}
Y^t_i &= \text{LayerNorm}\left(\sum_{k=1}^{N} O^t_{i,k} V_k / \sqrt{d} + H^t_i\right), \\
H^{t+1}_i &= \text{LayerNorm}(Y^t_i + \text{FFN}(Y^t_i)).
\end{align}
The Transformer has \(T\) layers, with \(\boldsymbol{E} = \boldsymbol{H}^T\) as the output embeddings.
In the PPO module, we calculate action probabilities and predicted values using \(\boldsymbol{E}\):
\begin{align}
\text{Logit}_i &= \text{Linear}_A(E_i) \to \mathbb{R}, \\
\textbf{Prob} &= \text{Softmax}(\text{Logit}), \\
\text{PredictedValue} &= \text{Linear}_{PV}(\text{Attention}(\boldsymbol{E})) \to \mathbb{R}.
\end{align}
We sample actions from the probability distribution and train the model using rewards and predicted values.
For ring partitioning, two key differences arise. First, we employ Ring Transformation on the graph. Second, the positional embedding is 2-dimensional, reflecting the adjacency of the first and last nodes. Specifically, for a circular node with coordinates \((x, y)\), we use:
\[
PE = \text{Linear}_{2D}(x \oplus y) \to \mathbb{R}^d
\]
to generate the positional embedding.

\appsection{Pruning Input Sequence Length for \ourmodel}
\label{app:shorten}
To improve the capability of processing larger graphs, we propose a pruning 
strategy on existing graphs when the transformed sequences are input to the \ourmodel.
Specifically, after Ring/Wedge Transformation, we can choose partial
of nodes as the valid ones, and ignore other nodes to be input to 
the \ourmodel. Currently we select nodes that with Minimum Cut Weights.
The results are shown in the following, we apply it on 500-node
graphs, the Normalized Cut has no loss when we decrease the node
number into half, and it only increases around 25\% when we only
keep 100 nodes (see Table ~\ref{tab:nc-pruning}).
\begin{table}[h]
\centering
\caption{Normalized Cut (NC) under Node Pruning with Respect to 500 Nodes}
\label{tab:nc-pruning}
\begin{tabular}{c|c|c}
\hline
\textbf{Node Count} & \textbf{Normalized Cut (NC)} & \textbf{$\Delta$ (\%)} \\
\hline
500 & 0.007205 & 0.00 \\
350 & 0.007088 & -1.63 \\
250 & 0.006927 & -3.86 \\
150 & 0.009035 & +25.38 \\
100 & 0.009090 & +26.15 \\
50  & 0.009899 & +37.44\\
20  & 0.010406 & +44.51 \\
\hline
\end{tabular}
\end{table}
\FloatBarrier
\appsection{Dynamic Programming Algorithm for Ring Partition Phase}
\label{app:dp}
We show the pesudo-code of dynamic programming algorithm used in Ring Partition phase in Algorithm \ref{algo:dp}. This allows us performing ring partition only once. The time complexity of this algorithm is $O(n^2k)$.
The DP matrix  $dp[i,j]$ stores the minimum normalized cut value when partitioning the first  $i$ nodes into $j$ segments, with transitions recorded in the predecessor matrix  $pre[i][j]$. The final loop traces back from the last segment's optimal value to reconstruct the partition indices by following $pre$ entries iteratively.
\begin{algorithm}
\caption{Dynamic Programming for Ring Partition} 
\label{algo:dp}
\KwIn{Precomputed cut weight matrix $Cut$, volume matrix $Volume$, number of partitions $k$} 
\KwOut{Optimal Normalized Cut $res$, partition indices $P$}
$sector\_nc[i, j] \gets (Cut[i] + Cut[j]) / Volume[i, j]$ for all $i, j$;
// $dp[i, j]$ means the best result when we perform partition on node $i$ and it is the $j$-th partition \\
$dp[i,j] \gets \infty$ for all $i,j$; 
$dp[0,0] \gets 0$; 
// $pre[i,j]$ records where the value for $dp[i,j]$ transits from \\
$pre[i,j] \gets 0$;
\For{$i$ from $1$ to $|Cut| - 1$}{ 
    \For{$j$ from $1$ to $k - 1$}{ 
        // enumerate all $p<i$ and assume last partition is from $p$ to $i$ \\
        \For{$p$ from $1$ to $i - 1$}{ 
            $agg\_res[p] \gets \max(dp[p, j - 1], sector\_nc[p, i])$; \\
        }
        $pre[i,j] \gets \arg\min(agg\_res)$; \\
        $dp[i,j] \gets agg\_res[argmin]$; \\
    } 
}
// The last partition should be from $p$ to $|Cut|$, update it to $dp[p, k - 1]$ \\
\For{$p$ from $1$ to $|Cut| - 1$}{ 
    $dp[p, k - 1] = \max(dp[p, k - 1], sector\_nc[p, |Cut| - 1])$;
}
$result \gets dp[res\_x,res\_y]$; 
// get final partition indices \\
$r_x \gets \arg\min(dp[:, k - 1])$; 
$r_y \gets k - 1$; 
$P\gets\{\}$;
\While{$r_y>0$}{
    $P\gets P\cup \{r_x\}$;\\
    $r_x\gets pre[r_x,r_y]$;\\
    $r_y\gets r_y-1$;
}
\KwRet $result, P$
\end{algorithm}

\FloatBarrier
\appsection{Pseudo Codes of Algorithms}
\label{app:pseudo}
{\footnotesize
{\normalsize
In this section we give pseudo codes for algorithms, including Ring and Wedge Transformation, Valume and Cut calculation, Ring and Wedge Transformer, PPO and full training pipeline.
\par}
\begin{algorithm}
    \caption{Ring Transformation}
    \label{alg:graph_to_line}
    \KwIn{graph $G = (V,E,W,o)$}
    \KwOut{Converted line graph $G_l$}
    $r\gets \text{radius of } V - o$\;
\For{each element $i$ from $1$ to $|r|$ }{
    // $\text{rank of } r[i] \text{ in the sorted list of } r$ \\
    $\text{Index}[i] \gets \sum_{j=1}^{n} \mathbf{1}(r[j] \leq r[i])$\;
}
\For{each element $i$ from $1$ to $|E|$}{
    $E_{new}[i]\gets \left\{\text{Index}[E[i].x], \text{Index}[E[i].y]\right\}$\;
}
\For{each element $i$ from $1$ to $|V|$}{
    $V_{new}[i]\gets (\text{Index}[i], 0)$\;
}
    \KwRet $G_c = (V_{new}, E_{new}, W, (0, 0))$
\end{algorithm}
\begin{algorithm}
    \caption{Wedge Transformation}
    \label{alg:graph_to_circle}
    \KwIn{graph $G = (V,E,W,o)$}
    \KwOut{Converted circle graph $G_c$}
    $a\gets \text{angles of } V - o$\;
\For{each element $i$ from $1$ to $|a|$}{
    // $\text{rank of } a[i] \text{ in the sorted list of } a$ \\
    $\text{Index}[i] \gets \sum_{j=1}^{n} \mathbf{1}(a[j] \leq a[i])$\;
}
\For{each element $i$ from $1$ to $|a|$}{
    $a_{new}[i]\gets\frac{2\pi}{|a|} \text{Index}[i]$ \;
}
\For{each element $i$ from $1$ to $|E|$}{
    $E_{new}[i]\gets \left\{a_{new}[E[i].x], a_{new}[E[i].y]\right\}$\;
}
\For{each element $i$ from $1$ to $|a_{new}|$}{
    $V_{new}[i]\gets (\sin(a_{new}[i]), \cos(a_{new}[i]))$\;
}
    \KwRet $G_c = (V_{new}, E_{new}, W, (0, 0))$
\end{algorithm}
\begin{algorithm}
    \caption{Volume and Cut for Line}
    \label{alg:nc_for_line}
    \KwIn{Line graph $G_l = (V,E,W,o)$}
    \KwOut{Cut $Cut$ and Volume $Volume$}
    $a\gets \text{angles of} V - o$\;
    \For{$e, w$ in $E, W$}{
        \If{$e.x < e.y$}{
            $x, y \gets e.x, e.y$
        }
        \Else{
            $x, y \gets e.y, e.x$
        }
        \For{$i$ from $x$ to $y$}{
            $Cut[i] \gets Cut[i] + w$\;
        }
        \For{$i$ from $1$ to $x$}{
            \For{$j$ from $y$ to $|V|$}{
                $Volume[i, j] \gets Volume[i, j] + w$\;
            }
        }
    }
    \KwRet $Cut, Volume$
\end{algorithm}
\begin{algorithm}
    \caption{Volume and Cut for Circle}
    \label{alg:nc_for_circle}
    \KwIn{Circle graph $G_c = (V,E,W,o)$}
    \KwOut{Cut $Cut$ and Volume $Volume$}
    $a\gets \text{angles of} V - o$\;
    \For{$e, w$ in $E, W$}{
        $x, y \gets e.x, e.y$
        \If{$e.x > e.y$}{
            $y \gets y + |V|$\;
        }
        \For{$i$ from $x$ to $y$}{
            $Cut[i \% |V|] \gets Cut[i \% |V|] + w$\;
        }
        \For{$i$ from $1$ to $x$}{
            \For{$j$ from $y$ to $|V|$}{
                $Volume[i, j] \gets Volume[i, j] + w$\;
            }
        }
    }
    \For{$i$ from $1$ to $|V|$}{
        \For{$j$ from $1$ to $i - 1$}{
            // when $i > j$, means the direction that cross n-to-1 part \\
            $Volume[i, j] = Volume[j, i]$\;
        }
    }
    \KwRet $Cut, Volume$
\end{algorithm}
\begin{algorithm}
    \caption{Ring Wedge Transformer with Ring Partition}
    \label{alg:wrt_ring}
    \KwIn{Line graph $G_l = (V,E,W,o)$, current partition $P$}
    \KwOut{Embeddings for each nodes $emb$}
    $Cut, Volume \gets Alg4(G_c)$\;
    // shape [N, 1] to [N, H] \;
    $x\gets Linear(Cut)$\;
    // shape [N, N, 1] to [N, N, H] to [N, N, 1] \;
    $VMask \gets Linear(Volume)$ \;
    $PMask[i,j] \gets 0 \text{if i and j are in same partition}$ \;
    $PMask[i,j] \gets -\infty \text{if i and j are in different partition}$ \;
    // Pos is positional embedding, in circle partition, just same as normal NLP Transformer
    $H_0 = x + \text{Pos}$ \;
    // L is layer number \;
    \For{i from 1 to L}{
        $Q, K, V \gets \text{Linear}(H_{i-1})$ \;
        $A \gets QK^T + VMask + PMask$ \;
        $H'_i \gets \text{Norm}(AV) + H_{i-1}$ \;
        $H_i \gets \text{Norm}(\text{Linear}((H'_i)) + H'_i$ \;
    }
    \KwRet $H_L$
\end{algorithm}
\begin{algorithm}
    \caption{Ring Wedge Transformer with Wedge Partition}
    \label{alg:wrt_circle}
    \KwIn{Circle graph $G_l = (V,E,W,o)$, current partition $P$}
    \KwOut{Embeddings for each nodes $emb$}
    $Cut, Volume \gets \text{Algorithm}3(G_c)$\;
    // shape [N, 1] to [N, H] \;
    $x\gets Linear(Cut)$\;
    // shape [N, N, 1] to [N, N, H] to [N, N, 1] \;
    $VMask \gets Linear(Volume)$ \;
    $PMask[i,j] \gets 0 \text{ if i to j are in same partition}$ \;
    $PMask[i,j] \gets -\infty \text{ if i and j are in different partition}$ \;
    // Pos is positional embedding, x-y coords on the circle
    $H_0 = x + \text{Pos}$ \;
    // L is layer number \;
    \For{i from 1 to L}{
        $Q, K, V \gets \text{Linear}(H_{i-1})$ \;
        $A \gets QK^T + VMask + PMask$ \;
        $H'_i \gets \text{Norm}(AV) + H_{i-1}$ \;
        $H_i \gets \text{Norm}(\text{Linear}((H'_i)) + H'_i$ \;
    }
    \KwRet $H_L$
\end{algorithm}
\begin{algorithm}
    \caption{PPO with Embeddings}
    \label{alg:ppo}
    \KwIn{Embeddings for each nodes, i.e. actions, $emb$}
    \KwOut{Action policy logits $a$, and critic for current state $v$}
    // [N, H] to [N, H] to [N, 1] \;
    $a \gets \text{Linear(Activate(Linear(}emb)))$ \;
    // [N, H] to [1, H] to [1, 1] \;
    $c \gets \text{Linear(Activate(Attention(}emb)))$ \;
    \KwRet $a, c$
\end{algorithm}
\begin{algorithm}
    \caption{Full Training Pipeline}
    \label{alg:full}
    \KwIn{Graph $G = (V, E, W, o)$, target partition number $P_{max}$, target ring partition number $P_c$}
    \KwOut{Next partition $a$}
    $P \gets \{V\}\}$ \;
    $samples \gets \{\}$ \;
    \While{not converge}{
        // perform $P_{max}$ steps to generate partition and save into samples
        \For{i from 1 to $P_{max}$}{
            \If{$|P| <= P_c$}{
                // do ring partition
                $G_l \gets \text{GraphToLine}(G)$ \;
                $Emb \gets \text{TransformerWithRing}(G_l, P)$ \;
                $p, critic \gets \text{PPO}(Emb)$ \;
                $a \gets $ sample action from $p$ \;
                $r \gets$ radius of $G_l.V[action]$ \;
                $P' \gets$ partition p by circle with radius $r$ \;
            }
            \Else{
                // do wedge partition
                $G_c \gets \text{GraphToCircle}(G)$ \;
                $Emb \gets \text{TransformerWithWedge}(G_l, P)$ \;
                $p, critic \gets \text{PPO}(Emb)$ \;
                $a \gets $ sample action from $p$ \;
                $angle \gets$ angle of $G_l.V[action]$ \;
                $P' \gets$ partition p by wedge with angle $angle$ \;
            }
            \If{$|P| = P_{max}$}{
                $r \gets \text{NormalizedCut}(G, P)$
            }
            \Else{
                $r \gets 0$
            }
            $samples.add((G, P, p, critic, a, r))$ \;
            $P \gets P'$
        }
        // calculate loss and train with samples \;
        \If{$|samples| = target\_size$}{
            \For{$sample$ in $samples$}{
                $p_{old}, c_{old}, r \gets sample$
                // here use sample as PPO input, in fact sample will do same as above to calculate p and critic. 
                $p, critic, critic' \gets \text{PPO}(sample)$ \;
                $adv \gets r - \gamma critic' + critic $ \;
                $loss_p \gets \text{clip}(p / p_{old} * adv)$ \;
                $loss_v \gets (r - \gamma critic' + critic)^2$ \;
                $loss_{ent} \gets \text{Entropy}(p)$ \;
                $L \gets w_p loss_p + w_v loss_v + w_{ent} loss_{ent}$ \;
                Backward loss $L$ \;
            }
            $samples \gets \{\}$
        }
    } 
\end{algorithm}

}

\fi

\end{document}
\endinput